\newcommand{\floor}[1]{\left\lfloor{#1}\right\rfloor}
\newcommand{\lowvdots}{\vphantom{\int\limits^x}\smash{\vdots}}
\newcommand{\bu}{\boldsymbol{u}}
\newcommand{\Parikh}[1][\kern0.3ex]{\mathrm{P}_{\kern-0.3ex#1}}
\newcommand{\N}{\mathbb{N}}
\newcommand{\Z}{\mathbb{Z}}
\newcommand{\R}{\mathbb{R}}
\newcommand{\A}{\mathcal{A}}
\newcommand{\B}{\mathcal{B}}
\newcommand{\D}{\mathcal{D}}
\renewcommand{\L}{\mathcal{L}}
\newtheorem{theorem}{Theorem}
\newtheorem{proposition}[theorem]{Proposition}
\newtheorem{corollary}[theorem]{Corollary}
\newtheorem{lemma}[theorem]{Lemma}
\theoremstyle{definition}
\newtheorem{definition}[theorem]{Definition}
\newtheorem{example}[theorem]{Example}
\newtheorem{remark}[theorem]{Remark}
\newcommand\blfootnote[1]{%
  \begingroup
  \renewcommand\thefootnote{}\footnote{#1}%
  \addtocounter{footnote}{-1}%
  \endgroup
}
\begin{document}

\title{Lattice Bounded Distance Equivalence for 1D Delone Sets with Finite Local Complexity}

\author{Petr Ambrož \quad Zuzana Masáková \quad Edita Pelantová\\[2mm]
\small Department of Mathematics, Faculty of Nuclear Sciences and Physical Engineering\\
\small Czech Technical University in Prague, Trojanova 13, 120 00 Praha, Czech Republic
\vspace{-\baselineskip}}







\date{}


\allowdisplaybreaks

\maketitle

\blfootnote{Emails: \texttt{petr.ambroz@fjfi.cvut.cz}, \texttt{zuzana.masakova@fjfi.cvut.cz}, \texttt{edita.pelantova@fjfi.cvut.cz}}

\begin{abstract}
  Spectra of suitably chosen Pisot-Vijayaraghavan numbers represent non-trivial examples of self-similar Delone
  point sets of finite local complexity, indispensable in quasicrystal modeling.  For the case of
  quadratic Pisot units we characterize, dependingly on digits in the corresponding numeration
  systems, the spectra which are bounded distance to an average lattice. Our method stems in
  interpretation of the spectra in the frame of the cut-and-project method.  Such structures are
  coded by an infinite word over a finite alphabet which enables us to exploit combinatorial notions
  such as balancedness, substitutions and the spectrum of associated incidence matrices.



\end{abstract}


\section{Introduction}


Point sets arising by a small fluctuation of elements of a lattice in $\mathbb{R}^d$ are said to be
bounded distance equivalent to a lattice or shortly BDL sets.  Such sets are studied both in
mathematical and physical literature. In physics, the so-called `average lattice' helps to
understand the diffraction properties of a BDL set which models the atomic sites of a
non-crystallographic material~\cite{Baranidharan}. Other properties of the material
may be influenced by the parameters of the average lattice. For example, average spacing between
sites in the one-dimensional Fibonacci chain determines the sound velocity, see~\cite{NauWaThoBar}.

In order to enable such a study for a larger class of aperiodic structures, we propose to focus on
one-dimensional sequences arising as the spectra of real numbers in the sense introduced by Erd\H os
et al.~\cite{Erdos}. The Fibonacci sequence also can be interpreted as the spectrum of the golden
ratio. This algebraic concept is intrinsically connected with numeration systems in irrational
bases, which allow efficient algorithms for computation on quasicrystals~\cite{BuFrGaKr}. Similarly,
as it is the case of the Fibonacci chain, the spectra of suitably chosen Pisot-Vijayaraghavan bases
arise by the cut-and-project scheme. Starting with the break-through paper of Kramer and
Neri~\cite{KramerNeri}, the cut and projection is considered as an irreplaceable tool in
quasicrystallography.

We start our study by considering general one-dimensional sets that can be expressed using a
strictly increasing sequence of points $\Lambda =\{x_n\}_{n\in \mathbb{Z}}$ for which it moreover
holds that the distances $x_{n+1}-x_n$ between consecutive elements take only finitely many
values. This property is said finite local complexity and the sequence of gaps can be coded by a
bidirectional infinite word over a finite alphabet.  Decision whether a point set with finite local
complexity is bounded distance equivalent to a lattice is a difficult problem in general.  When the
infinite word coding the set $\Lambda$ is balanced, the task is easy.

On the other hand, associating positive lengths to the letters of a finite alphabet $\A$, every
bidirectional infinite word over $\A$ has a geometric representation $\{x_n\}_{n\in \mathbb{Z}}$,
which is of finite local complexity. We study infinite words generated using rewriting rules over
the alphabet $\mathcal{A}$, i.e., fixed points of morphisms over $\A$. If the infinite word is
balanced, the geometric representation is obviously BDL. The balance property can be decided using
the result of Adamczewski~\cite{AdamczBalance}.  The balance property is nevertheless not a
necessary condition for lattice bounded distance equivalence.  In Proposition~\ref{prop:BDL_repr_of_fixed_point}
we give a weaker sufficient condition so that a fixed point of a
morphism has a non-trivial geometric representation with the BDL property.  Similarly as in the
result of Adamczewski, our sufficient condition is formulated using the spectrum of the incidence
matrix of the morphism.

A class of one-dimensional sets studied for the BDL property is the family of the sets $\Z_{\beta}$
for $\beta>1$, where $\Z_\beta$ is formed by the so-called $\beta$-integers, as defined
in~\cite{BuFrGaKr}. In~\cite{Gazeau}, Gazeau et al.\ have shown that $\Z_\beta$ is BDL if $\beta$ is
a Pisot number. (Recall that $\alpha$ is a Pisot number if it is an algebraic integer greater than 1
with all algebraic conjugates in the interior of the unit disc.)

A different type of a one-dimensional set with finite local complexity is the generalized Erd\H os
spectrum of a real base $\alpha$, $|\alpha|>1$, namely the set
\begin{equation}\label{eq:defspectrum}
 X^{\mathcal{D}}(\alpha) =
  \Big\{\sum_{i=0}^na_i\alpha^i : n\in\N, a_i\in\mathcal{D}\Big\}.
\end{equation}
where $\mathcal{D}$ is a finite set of consecutive integers containing 0 with $\#\D>\beta$ and
$|\alpha|$ is a Pisot number. Directly from the definition~\eqref{eq:defspectrum} we see that the spectrum is self-similar with factor $\alpha$, 
i.e. $\alpha X^{\mathcal{D}}(\alpha) \subset X^{\mathcal{D}}(\alpha)$.

For the so-called confluent Parry numbers (which belong to the family of Pisot numbers) the spectrum
with alphabet $\D=\{0,\dots,\floor{\beta}\}$ coincides with
$\Z_\beta\cap\R^{\geq0}$~\cite{Frougny,GaHa}, and thus it can be symmetrically extended to a BDL
set. Other cases of the spectra have not yet been inspected for the BDL property.  We study the
question for a family of the spectra of quadratic Pisot units with the alphabet being any set of 
consecutive integers ensuring the Delone property of the spectrum.

In the case that $|\alpha|$ is a quadratic Pisot unit, we prove in Theorem~\ref{t:spectra}, that the
BDL property depends only on the cardinality of the alphabet $\mathcal{D}$, and, surprisingly, the
values $\#\mathcal{D}$, for which $X^\mathcal{D}(\alpha)$ is BDL form an arithmetic sequence. The
main tool used in the proof is the result of~\cite{MaPaPe} which interprets the spectrum
$X^\mathcal{D}(\alpha)$ as a cut-and project set. This interpretation further allows us to 
apply classical theorem of Kesten~\cite{Kesten}.

The paper is organized as follows. In Section~\ref{sec:balance} we recall necessary notions of
combinatorics on words, in particular properties of balanced words. We derive that the image of a
balanced word by a non-erasing morphism is also balanced. In Section~\ref{sec:BDL1d} we formalize
the notion of bounded distance equivalence to a lattice and we study this property for geometric
representations of infinite words (Section~\ref{sec:geomrep}).  For fixed points of morphisms, we
put into context their balancedness (Section~\ref{sec:subst}) and their BDL property
(Section~\ref{sec:BDLsubst}).  Definition of a cut-and-project set and a criterion for its BDL
property are given in Section~\ref{sec:BDLcap}.  Section~\ref{sec:spectrum} is devoted to the study
of spectra of quadratic Pisot numbers and their BDL property.


\section{Balanced infinite words}\label{sec:balance}


In this chapter we recall basic notions connected to infinite words and the property of balancedness
which plays an important role in our considerations.

Let $\A$ be a finite alphabet. The set of finite words over $\A$, equipped with the operation of
concatenation and the empty word $\epsilon$ as the neutral element, is a monoid, which we denote by
$\A^*$.  We will also consider infinite words, namely one-directional infinite words
$\bu=u_0u_1u_2\cdots\in\A^\N$ and bi-directional infinite words $\bu=\cdots
u_{-2}u_{-1}|u_0u_1u_2\cdots\in\A^\Z$. The delimiter $|$ is important when deciding whether two
bi-directional infinite words coincide. If a word $u$ (finite or infinite) is written as $u=vwv'$
for some (possibly empty) words $v,w,v'$, then $v$ is a prefix, $v'$ a suffix and $w$ a factor of
$u$. In particular, we denote by $u_{[i,j)}=u_iu_{i+1}\cdots u_{j-1}$.  The length of a finite word
$w=w_1\cdots w_n$ is denoted by $|w|=n$. The number of letters $a\in\A$ occurring in the word $w$
is denoted by $|w|_a$. For a finite word $w$ over an alphabet $\A=\{a_1,\dots,a_d\}$ we define its
Parikh vector $\Parikh(w)=(|w|_{a_1},\dots,|w|_{a_d})^{\mathrm{T}}$.  Obviously,
$|w|=(1,1,\ldots,1)\Parikh(w)$.  The set of all factors of an infinite word $\bu$ is the language
of $\bu$ which we denote by $\L(\bu)$.  If there exists a positive integer $p$ such that
$u_{n+p}  = u_n$ for all $n \in \mathbb{Z}$, the word $\bu=\cdots u_{-2}u_{-1}u_0u_1u_2\cdots\in\A^\Z$ is
called periodic, any its factor of length $p$ is its period .
If $u_{n+p} = u_n$ for all $n \in \mathbb{Z}$ up to finitely many exceptions, $\bu$ is eventually
periodic.  Otherwise, $\bu$ is aperiodic.

The combinatorial property in focus is balancedness of the infinite word $\bu$.

\begin{definition}
  An infinite word $\bu\in\A^\N$ ($\A^\Z$) is said to be balanced, if there exists a constant $c>0$
  such that for any pair of factors $w,v$ of $\bu$ of the same length $|w|=|v|$ and for any letter
  $a\in\A$ we have $|w|_a-|v|_a\leq c$.
\end{definition}

Note that sometimes, the infinite word with the above property is called $c$-balanced and the notion
of balanced words is taken only for $c=1$. Binary aperiodic words which are $1$-balanced are called
sturmian. They were introduced by Morse and Hedlund in~\cite{MoHe} and thanks to an overwhelming
number of equivalent definitions they admit, they are the most studied object in the combinatorics
on infinite words~\cite{berstel}.  $1$-balanced words over a multiletter alphabet were described
in~\cite{graham-jct-15} and~\cite{hubert-tcs-242}.

It is easy to show that the frequencies of letters in a balanced word exist. If $\bu$ is a
bidirectional infinite word over an alphabet $\A$, then the frequency of $a\in\A$ in $\bu$ is
defined by the limit
\[
\varrho_a=\lim_{\substack{w\in\L(\bu)\\ |w|\to\infty}}\frac{|w|_a}{|w|}.
\]
if it exists. For completeness, we include the proof.

\begin{proposition}\label{p:frekvence}
  Let $\bu$ be a bidirectional $c$-balanced infinite word. Then the frequency $\varrho_a$ of any
  letter $a\in\A$ in $\bu$ exists. Moreover, for any factor $w\in\L(\bu)$ and any letter $a\in\A$ we
  have
  \[
  \big||w|_a - \varrho_a|w|\big|\leq c.
  \]
\end{proposition}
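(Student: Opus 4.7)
The plan is to introduce, for a fixed letter $a\in\A$, the extremal counts
$f_n=\max\{|w|_a : w\in\L(\bu),\ |w|=n\}$ and $g_n=\min\{|w|_a : w\in\L(\bu),\ |w|=n\}$.
The $c$-balance hypothesis immediately gives $f_n-g_n\leq c$. The goal is to show that $f_n/n$ and $g_n/n$ share a common limit, which will play the role of $\varrho_a$, and that moreover $g_n\leq\varrho_a n\leq f_n$ holds for every $n$.

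First I would observe that any factor of $\bu$ of length $m+n$ decomposes as a concatenation of a factor of length $m$ and a factor of length $n$, whose $a$-counts add. This yields the subadditive inequalities $f_{m+n}\leq f_m+f_n$ and the superadditive inequalities $g_{m+n}\geq g_m+g_n$. Fekete's lemma then ensures that $f_n/n$ and $g_n/n$ converge. Since $|f_n/n-g_n/n|\leq c/n\to 0$, the two limits coincide, and I denote this common value by $\varrho_a$. Any sequence of factors $w$ with $|w|\to\infty$ satisfies $g_{|w|}/|w|\leq |w|_a/|w|\leq f_{|w|}/|w|$, so $|w|_a/|w|\to \varrho_a$, giving the existence of the frequency.

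For the sharper quantitative bound, I would use the iterated block decomposition $f_{mn}\leq m f_n$ obtained by splitting a factor of length $mn$ into $m$ consecutive blocks of length $n$. Dividing by $mn$ gives $f_{mn}/(mn)\leq f_n/n$, and letting $m\to\infty$ yields $\varrho_a\leq f_n/n$; symmetrically $\varrho_a\geq g_n/n$. Thus $g_n\leq \varrho_a n\leq f_n$ for every $n$. Combined with $g_n\leq|w|_a\leq f_n$ for any factor $w$ of length $n$, we conclude
\[
\big||w|_a-\varrho_a|w|\big|\leq f_n-g_n\leq c,
\]
which is the desired inequality.

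The only mildly non-routine step is the convergence of $f_n/n$, which is a direct application of Fekete's lemma once the block decomposition is noticed; beyond this, the proof amounts to sandwiching $|w|_a$ and $\varrho_a n$ between $g_n$ and $f_n$, so no genuine obstacle is anticipated.
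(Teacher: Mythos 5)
Your proof is correct, but it follows a genuinely different route from the paper's. The paper works with the single prefix-count sequence $A_n=|u_{[0,n)}|_a$, derives the approximate additivity $A_m+A_n-c\leq A_{m+n}\leq A_m+A_n+c$ from balancedness, and then runs a Fekete-style iteration by hand, carrying the error term $c$ through to obtain $|A_N-\varrho_a N|\leq c$ for prefixes; to get the bound for an arbitrary factor $w$ it must then shift the origin of the bidirectional word so that $w$ becomes a prefix, using that the shifted word has the same language and hence the same balance constant and frequencies. You instead pass to the extremal counts $f_n=\max\{|w|_a: w\in\L(\bu),\,|w|=n\}$ and $g_n=\min\{\cdots\}$, which are \emph{exactly} sub- and superadditive (since any length-$(m+n)$ factor splits into two factors whose $a$-counts add), so Fekete's lemma applies off the shelf; the two limits coincide because $f_n-g_n\leq c$, and the key inequality $g_n\leq \varrho_a n\leq f_n$ is immediate from Fekete's identification of the limit as $\inf_n f_n/n$ and $\sup_n g_n/n$ (your block decomposition $f_{mn}\leq mf_n$ re-proves this directly, which is fine). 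Your approach buys a cleaner argument: no error-term bookkeeping and no shifting step, since $f_n$ and $g_n$ already range over all factors, and the final bound $\bigl||w|_a-\varrho_a|w|\bigr|\leq f_n-g_n\leq c$ falls out of the sandwich in one line. The paper's version is more self-contained (it does not invoke Fekete by name) and produces the prefix-specific inequalities in the form reused later in Proposition~\ref{p:anygeomBDL}, but those follow from your general factor bound anyway. No gaps.
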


\begin{proof}
  Consider a fixed $a\in\A$. Define $A_n$ as the number of occurrences of the letter $a$ in the
  prefix $u_0\cdots u_{n-1}$, i.e., $A_n:=|u_{[0,n)}|_a$. We will show that there exists a constant
  $c$ such that $A_n$ satisfies
  \begin{equation}\label{eq:farkas}
    A_n+A_m - c\leq A_{n+m} \leq A_n+A_m + c,\qquad\text{ for every } m,n\in\N.
  \end{equation}
  Take $n,m\in\N$. Then $u_{[0,m+n)}=u_{[0,n)}w$ for some factor $w$ of $\bu$ of length $|w|=m$. We
  have
  \[
  A_n + A_m - c \leq A_{m+n}=A_n+|w|_a \leq A_n + A_m + c,
  \]
  where $c$ is the constant from the balance property of $\bu$.  Fix $N\in\N$. Then for arbitrary
  $n\in\N$ there exist $k,r\in\N_0$ such that $n=kN+r$, where $k:=\floor{n/N}$, $0\leq r<N$.
  By~\eqref{eq:farkas}, we have
  \[
  kA_N+A_r-kc \leq A_n=A_{kN+r}\leq kA_N+A_r+kc.
  \]
  Dividing the above by $n$ and considering $n\to\infty$ we obtain
  \begin{equation}\label{eq:farkas2}
    \frac{A_N}{N} - \frac{c}{N} \leq \liminf_{n\to\infty} \frac{A_n}{n}\leq \limsup_{n\to\infty}
    \frac{A_n}{n} \leq \frac{A_N}{N} + \frac{c}{N},
  \end{equation}
  where we have used that $\lim_{n\to\infty}\frac{k}{n}=\lim_{n\to\infty}\tfrac1n\floor{n/N} =
  \tfrac{1}{N}$ and $\lim_{n\to\infty}\frac{A_r}{n}=0$.  Since~\eqref{eq:farkas2} holds for every
  $N\in\N$, we obtain that
  \[
  \liminf_{n\to\infty} \frac{A_n}{n}= \limsup_{n\to\infty} \frac{A_n}{n} =\lim_{n\to\infty}\frac{A_n}{n}.
  \]
  We have thus derived that
  $\lim_{n\to\infty}\tfrac{A_n}{n}=\lim_{n\to\infty}\tfrac{1}{n}|u_{[0,n)}|_a$ exists.  For any
  factor $w$ of $\bu$ of length $|w|=n$, we have by the balance property that
  $A_n-c\leq |w|_a\leq A_n +c$, and thus the limit of $\frac{|w|_a}{n}$ exists and is equal to
  \[
  \varrho_a:= \lim_{\substack{w\in\L(\bu)\\ |w|\to\infty}}\frac{|w|_a}{|w|} =
    \lim_{n\to\infty}\frac{A_n}{n}.
  \]
  For a prefix of $\bu$ of length $N$ inequality~\eqref{eq:farkas2} also implies that
  \[
  \big||u_{[0,N)}|_a - \varrho_a|u_{[0,N)}|\big|\leq c
  \]
  for any $N\in\N$. Consider any factor $w\in\L(\bu)$ and an index $i$ such that
  $w=u_iu_{i+1}\cdots u_{i+n-1}$. Then the infinite word $\boldsymbol{w}$ given by $w_n:=u_{n-i}$
  for $n\in\Z$ has the same language as $\bu$ and thus it is $c$-balanced and the
  frequency of a letter $a\in\A$ in $\boldsymbol{w}$ is $\varrho_a$. Since $w$ is a prefix of
  $\boldsymbol{w}$, we obtain that
  \[
  \big||w|_a - \varrho_a|w|\big|\leq c.
  \]
\end{proof}

The balance property of an infinite word is preserved under the action of a morphism. Let $\A,\B$ be
finite alphabets. A mapping $\psi:\A^*\to\B^*$ is a morphism, if $\psi(wv)=\psi(w)\psi(v)$ for any
$w,v\in\A^*$. The morphism is non-erasing, if $\psi(a)$ is not an empty word for every $a\in\A$.
The action of a morphism is naturally extended to infinite words $\bu\in\A^\N$ or $\bu\in\A^\Z$, by
\[
\psi(u_0u_1u_2\cdots)=\psi(u_0)\psi(u_1)\psi(u_2)\cdots,
\]
or
\[
\psi(\cdots u_{-1}|u_0u_1u_2\cdots)=\cdots\psi(u_{-1})|\psi(u_0)\psi(u_1)\psi(u_2)\cdots.
\]
To any morphism $\psi:\A^*\to\B^*$, we associate its incidence matrix $M_\psi$. Its rows and
columns are indexed by $b\in\mathcal{B}$ and $a\in\mathcal{A}$, respectively.  We define
$(M_\psi)_{ba}=|\psi(a)|_b$.  Given a finite word $w\in\A^*$, the Parikh vector $\Parikh(\psi(w))$
of its image $\psi(w)$ can be calculated from the Parikh vector $\Parikh(w)$ of $w$ by
$\Parikh(\psi(w))=M_\psi\Parikh(w)$.

\begin{proposition}\label{p:morphicimagebalance}
  Let $\A,\B$ be finite alphabets.  Let $\bu$ be a balanced bidirectional infinite word
  over $\A$. Let $\psi:\A^*\to\B^*$ be a non-erasing morphism. Then $\psi(\bu)$ is also a
  balanced word.
\end{proposition}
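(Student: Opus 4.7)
The plan is to show that for each fixed letter $b \in \B$, the count $|w'|_b$ in any factor $w'$ of $\psi(\bu)$ equals a fixed multiple of $|w'|$ up to a bounded additive error. Comparing two factors of the same length, the leading terms cancel and the balance property with a uniform constant follows at once. The starting point is Proposition~\ref{p:frekvence}: the frequencies $\varrho_a$ of all $a \in \A$ in $\bu$ exist, and $\big||w|_a - \varrho_a |w|\big| \leq c$ for every factor $w$ of $\bu$, where $c$ is the balance constant of $\bu$. Set $L = \max_{a \in \A} |\psi(a)|$, which is finite and positive because $\A$ is finite and $\psi$ is non-erasing.

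Any factor $w'$ of $\psi(\bu)$ of length at least $2L$ admits a decomposition $w' = s\,\psi(w)\,p$, where $w$ is a factor of $\bu$, $s$ is a proper suffix of some $\psi(a)$, and $p$ is a proper prefix of some $\psi(a')$, so that $|s|,|p| < L$. Combining $\Parikh(\psi(w)) = M_\psi \Parikh(w)$ (which gives $|\psi(w)|_b = \sum_{a \in \A} |w|_a |\psi(a)|_b$) with $|w| = \sum_{a \in \A} |w|_a$, I obtain
\[
|w'|_b = \sum_{a \in \A} |w|_a |\psi(a)|_b + |s|_b + |p|_b, \qquad |w'| = \sum_{a \in \A} |w|_a |\psi(a)| + |s| + |p|.
\]
Substituting $|w|_a = \varrho_a |w| + \delta_a$ with $|\delta_a| \leq c$ converts both right-hand sides into linear functions of $|w|$ plus bounded corrections. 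Writing $\alpha_b = \sum_{a \in \A} \varrho_a |\psi(a)|_b$ and $M = \sum_{a \in \A} \varrho_a |\psi(a)| \geq 1$ (using $\sum_a \varrho_a = 1$ and $|\psi(a)| \geq 1$), eliminating $|w|$ between the two identities yields a constant $C$, independent of $w'$, for which
\[
\Big| |w'|_b - \tfrac{\alpha_b}{M} |w'| \Big| \leq C.
\]
Factors of length below $2L$ trivially satisfy $|w'|_b \leq 2L$, so the same bound holds after enlarging $C$. Consequently, for any two factors $w',v'$ of $\psi(\bu)$ of the same length, $\big||w'|_b - |v'|_b\big| \leq 2C$, establishing balancedness of $\psi(\bu)$.

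The only real technical step is the bookkeeping around the boundary pieces $s$ and $p$ and verifying $M>0$ so the elimination is legitimate; both are handled by $\psi$ being non-erasing. I anticipate no deeper obstacle, since the balance property is essentially a bounded-deviation statement for letter counts, and this linear structure is preserved under $\psi$, whose action on Parikh vectors is precisely multiplication by $M_\psi$.
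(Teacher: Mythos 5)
Your proof is correct and follows essentially the same route as the paper: the same decomposition $w'=s\,\psi(w)\,p$ with boundary pieces of length at most $\max_a|\psi(a)|$, the same appeal to Proposition~\ref{p:frekvence}, and the same constants (your $\alpha_b$ and $M$ are the paper's $\lambda_b=e_b^{\mathrm T}M_\psi\varrho_{\bu}$ and $\lambda=(1,\dots,1)M_\psi\varrho_{\bu}$). The only cosmetic difference is that you eliminate $|w|$ to exhibit the frequency $\alpha_b/M$ of $b$ in $\psi(\bu)$ with bounded deviation, whereas the paper compares two factors of equal length directly through the lengths of their preimages.
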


\begin{proof}
  Let $\bu$ be a $c$-balanced infinite word. By Proposition~\ref{p:frekvence}, the frequencies of
  letters in $\bu$ exist and for any factor $w\in\L(\bu)$ we have
  \begin{equation}\label{eq:skorohustotyobec}
    \big| |w|_a-\varrho_a |w|\big| \leq c.
  \end{equation}
  For the morphism $\psi$, denote $\mu:=\max\{|\psi(a)| : a\in\A\}$. Then any factor $v$ in
  $\psi(\bu)$ can be written as $v=p\psi(w)s$, for some $w\in\A^*$, $p,s\in\B^*$, $|p|,|s|<\mu$.  We
  will show that there exist constants $\lambda,\kappa$ and for each letter $b\in\B$ constants
  $\lambda_b,\kappa_b$ (independent of $v$ and $w$) such that for any such $v$ and $w$ we have
  \begin{equation}\label{eq:konstanty}
    \big| |v|-\lambda |w|\big| \leq \kappa \quad\text{ and }\quad
    \big| |v|_b-\lambda_b |w|\big| \leq \kappa_b,\ b\in\B^*.
  \end{equation}

  By Proposition~\ref{p:frekvence}, the frequencies $\varrho_a$ of letters $a\in\A$ in the balanced
  word $\bu$ exist. If $\A=\{a_1,\dots,a_d\}$, denote
  $\varrho_{\bu}=(\varrho_{a_1},\dots,\varrho_{a_d})^{\mathrm{T}}$. Since $v=p\psi(w)s$, we
  can write $|v|=|p|+|s|+(1,1,\dots,1)M_\psi\Parikh(w)$.  Inequality~\eqref{eq:skorohustotyobec} then
  implies that $\Parikh(w)=|w|\varrho_{\bu} + g$, where the vector $g\in\R^d$ has components
  bounded by $c$.  We then have
  \begin{equation}\label{eq:delka}
    |v|=|p|+|s|+(1,1,\dots,1)M_\psi (|w|\varrho_{\bu} + g).
  \end{equation}

  Set
  \[
  \lambda:=(1,1,\dots,1)M_\psi\varrho_{\bu},\quad\text{ and }\quad
  \kappa:= 2\mu + c (1,1,\dots,1) M_\psi
  \left(\begin{smallmatrix} 1 \\ 1 \\ \lowvdots \\ 1 \end{smallmatrix}\right).
  \]
  Note that since $\psi$ is a non-erasing morphism, $(1,\ldots,1)M_\psi$ is a positive
  vector and thus $\lambda> 0$.  Then~\eqref{eq:delka} implies $\big| |v|-\lambda |w|\big| \leq
  \kappa$.  Similarly we derive $\big| |v|_b-\lambda_b |w|\big| \leq \kappa_b$ for any $b\in\B$
  where
  \[
  \lambda_b:=(0,\dots,1,\dots,0)M_\psi\varrho_{\bu},\quad\text{ and }\quad
  \kappa_b:= 2\mu + c (0,\dots,1,\dots,0) M_\psi
  \left(\begin{smallmatrix} 1 \\ 1 \\ \lowvdots \\ 1 \end{smallmatrix}\right).
  \]
  We have thus proved existence of constants so that~\eqref{eq:konstanty} is valid.

  Let now $v^{(1)}$, $v^{(2)}$ be factors of the infinite word $\psi(\bu)$ with
  $|v^{(1)}|=|v^{(2)}|$. Write $v^{(i)}=p^{(i)}\psi(w^{(i)})s^{(i)}$, for some $w^{(i)}\in\A^*$,
  $p^{(i)},s^{(i)}\in\B^*$, $|p^{(i)}|,|s^{(i)}|<\mu$, $i=1,2$.  From~\eqref{eq:konstanty} we have
  \[
  \lambda|w^{(1)}| - \kappa \leq |v^{(1)}|=|v^{(2)}|\leq \lambda|w^{(2)}| + \kappa,
  \]
  which implies
  \[
  \lambda\big(|w^{(1)}|-|w^{(2)}|\big) \leq 2\kappa.
  \]
  For a letter $b\in\B$ we then have using~\eqref{eq:konstanty}
  \[
  \begin{aligned}
    |v^{(1)}|_b-|v^{(2)}|_b &\leq \lambda_b|w^{(1)}|+\kappa_b - \lambda_b|w^{(2)}| + \kappa_b \\
    &\leq \lambda_b \big(|w^{(1)}|-|w^{(2)}|\big) + 2\kappa_b \leq
    \frac{\lambda_b}{\lambda}2\kappa + 2\kappa_b.
  \end{aligned}
  \]
  Setting $C:=\lceil\frac{\lambda_b}{\lambda}2\kappa + 2\kappa_b\rceil$, the above inequality means
  that the infinite word $\psi(\bu)$ is $C$-balanced.
\end{proof}


\section{Bounded distance equivalence}\label{sec:BDL1d}


Let us now formalize the notion of lattice bounded distance equivalence.

\begin{definition}
  A point set $\Lambda\subset\R^d$ is said to be Delone if it is uniformly discrete, that is, there
  exists $r>0$ such that $|x-y|\geq r$ for any $x,y\in\Lambda$, and relatively dense, that is, there
  exists $R>0$ such that every ball in $\R^d$ of radius $R$ contains at least one element of~$\Lambda$.
\end{definition}

\begin{definition}
  We say that two Delone sets $\Lambda,\Lambda'\subset\R^d$ are bounded distance equivalent, denoted
  by $\Lambda\sim_{bd}\Lambda'$, if there exist a constant $C>0$ and a bijection
  ${g}:\Lambda\to\Lambda'$ such that $|x-g(x)|<C$ for all $x\in\Lambda$.

  A Delone set $\Lambda\subset\R^d$ is bounded distance equivalent to a lattice (BDL), if there
  exist a $d$-dimensional lattice $L$ such that $\Lambda\sim_{bd}L$.
\end{definition}


Let us point out several simple properties of BDL sets:
\begin{itemize}
\item
  Let $x \in \mathbb{R} $. Then $\Lambda$ is BDL if and only if $\Lambda + x$ is BDL.
\item
  Let $\alpha \in \mathbb{R}, \alpha\neq 0 $.  Then $\Lambda$ is BDL if and only if $\alpha\Lambda$
  is BDL.
\item
  Let $F\subset \mathbb{R}$ be finite. Then $\Lambda$ is BDL if and only if $\Lambda \cup F$ is BDL.
\end{itemize}

In the one-dimensional case, the BDL property of a Delone set $\Lambda\subset\R$ is equivalent to
the fact that $\Lambda$ has bounded discrepancy.

\begin{lemma}\label{l:1dBDL}
  A Delone set $\Lambda\subset\R$ is bounded distance equivalent to the lattice $\xi\Z$, $\xi>0$, if
  and only if there exists $K>0$ such that for every $N>0$ we have
  \begin{equation}\label{eq:prevodBDL}
    \Big|\#\big(\Lambda\cap [0,N)\big) - \xi^{-1} N \Big| \leq K \ \text{ and }\
    \Big|\#\big(\Lambda\cap [-N,0)\big) - \xi^{-1} N \Big| \leq K.
  \end{equation}
\end{lemma}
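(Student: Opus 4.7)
The plan is to prove the characterization by two implications, both resting on the elementary count $\#(\xi\Z\cap[a,b))=\xi^{-1}(b-a)+O(1)$ uniformly in $a,b$.

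$(\Rightarrow)$ Assume $\Lambda\sim_{bd}\xi\Z$ via a bijection $g$ with $|x-g(x)|<C$. Fix $N>0$. First, $g$ sends $\Lambda\cap[0,N)$ injectively into $\xi\Z\cap(-C,N+C)$, giving the upper bound $\#(\Lambda\cap[0,N))\le\xi^{-1}(N+2C)+1$. Conversely, for $N\ge 2C$, if $y\in\xi\Z\cap[C,N-C)$ then $g^{-1}(y)\in(y-C,y+C)\subset[0,N)$, so $g^{-1}$ maps $\xi\Z\cap[C,N-C)$ into $\Lambda\cap[0,N)$, giving the matching lower bound. Sandwiching yields $\big|\#(\Lambda\cap[0,N))-\xi^{-1}N\big|\le 2C\xi^{-1}+1$ for $N\ge 2C$, and small $N$ are absorbed into a single constant $K$ by uniform discreteness of $\Lambda$. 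The negative half-line is treated symmetrically.

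$(\Leftarrow)$ Enumerate $\Lambda=\{x_n\}_{n\in\Z}$ in strictly increasing order with the convention $x_{-1}<0\le x_0$, which is well-defined by the Delone property. The plan is to define $g(x_n):=\xi n$ and to show that $|x_n-\xi n|$ is uniformly bounded. For $n\ge 1$, setting $N=x_n>0$ one has $\#(\Lambda\cap[0,x_n))=\#\{x_0,\dots,x_{n-1}\}=n$, so \eqref{eq:prevodBDL} gives $|n-\xi^{-1}x_n|\le K$, i.e.\ $|x_n-\xi n|\le K\xi$. For $n\le -1$, setting $N=-x_n>0$ one has $\#(\Lambda\cap[x_n,0))=-n$, and the second half of \eqref{eq:prevodBDL} yields the same bound. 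The borderline $n=0$ with $x_0\ge 0$ is either trivial (if $x_0=0$) or covered by taking $N=x_0$ in the first inequality, which forces $x_0\le K\xi$. Hence $g$ is a bijection with uniformly bounded displacement, establishing $\Lambda\sim_{bd}\xi\Z$.

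The counting estimates themselves are routine; the main obstacle is purely bookkeeping, namely matching the indexing convention for $\Lambda$ with the half-open intervals anchored at $0$ that appear in the hypothesis, and ensuring that the various off-by-one boundary terms (small $N$ in the forward direction, the index $n=0$ in the reverse direction) can all be absorbed into a single final constant.
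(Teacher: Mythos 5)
Your proof is correct and follows essentially the same route as the paper's: the forward direction is the same sandwich count comparing $\#(\Lambda\cap[0,N))$ with lattice points in the $C$-enlarged and $C$-shrunk intervals (the paper states this for a general interval $[\alpha,\beta)$ and then specializes to $[0,N)$ and $[-N,0)$), and the backward direction is the identical enumeration $\Lambda=\{x_n\}$ with $x_{-1}<0\le x_0$, $g(x_n)=\xi n$, and $N=x_n$ plugged into \eqref{eq:prevodBDL}. The only cosmetic differences are the orientation of the bijection and your explicit handling of small $N$, which the paper avoids by letting the lower bound become vacuous there.
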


\begin{proof}
  Implication $\Rightarrow$ is a consequence of the following claim:

  \textit{If $\Lambda\sim_{bd}\xi\Z$, $\xi>0$, i.e., there exists a bijection $g:\xi\Z\to\Lambda$
  and a constant $C$ such that $|g(\xi n) - \xi n|< C$ for every $n$, then for each interval
  $[\alpha,\beta)$ the difference between $\#\big(\Lambda\cap [\alpha,\beta)\big)$ and
  $\#\big(\xi\Z\cap [\alpha,\beta)\big)$ is bounded by $2C\xi^{-1}-2$.}

  For the proof of the claim, realize that if $a<b$ then
  \begin{equation}\label{eq:poctymriz}
    \floor{(b-a)\xi^{-1}} \leq \#\big(\xi\Z\cap [a,b)\big) \leq
    \left\lceil(b-a)\xi^{-1}\right\rceil.
  \end{equation}
  Denote $\#\big(\xi \Z\cap [\alpha-C,\beta+C)\big) = k$. Then $\Lambda$ has in $[\alpha,\beta)$ at
  most $k$ elements, i.e.,
  \begin{equation}\label{eq:poctyhorni}
    \#\big(\Lambda\cap [\alpha,\beta)\big) \leq \left\lceil(\beta-\alpha+2C)\xi^{-1}\right\rceil <
      (\beta-\alpha)\xi^{-1} + 2C\xi^{-1} +1.
  \end{equation}
  On the other hand, if $\#\big(\xi \Z\cap [\alpha+C,\beta-C)\big) = j$, then $\Lambda$ has in
  $[\alpha,\beta)$ at least $j$ elements, i.e.,
  \begin{equation}\label{eq:poctydolni}
    \#\big(\Lambda\cap [\alpha,\beta)\big) \geq \floor{(\beta-\alpha-2C)\xi^{-1}} >
      (\beta-\alpha)\xi^{-1} - 2C\xi^{-1} -1.
  \end{equation}
  Inequalities~\eqref{eq:poctyhorni},~\eqref{eq:poctydolni}, together with~\eqref{eq:poctymriz} give
  the bound
  \[
  \Big|\#\big(\Lambda\cap [\alpha,\beta)\big) - \#\big(\xi\Z\cap [\alpha,\beta)\big) \Big|
      \leq 2C\xi^{-1}+2.
  \]
  Implication $\Rightarrow$ of the lemma is obtained from the claim choosing $[\alpha,\beta)=[0,N)$,
  or $[\alpha,\beta)=[-N,0)$.

  In order to prove opposite implication, realize that a Delone set $\Lambda$ can be written as an
  increasing sequence $\Lambda=\{x_n:n\in\Z\}$ with $x_{-1}<0\leq x_0$.  Define a bijection
  $g:\Lambda\to\xi\Z$ by $g(x_n)=\xi n$.
  Obviously, $\#\big(\Lambda \cap [0,x_n)\big)=n$ for $n\geq 0$ and
  $\#\big(\Lambda \cap[x_n,0)\big)=n$, for $n<0$, and thus inequalities~\eqref{eq:prevodBDL} for
  $N=x_n$ imply that for each $x\in\Lambda$ we have $|x-g(x)|<K$.
\end{proof}

As a consequence of the proof, we can state that any 1-dimensional BDL set $\Lambda $ can be written
in the form $\Lambda = \{x_n:n\in\Z\}$, where $(x_n)_{n \in \mathbb{Z}}$ is a strictly increasing
sequence of real numbers such that $x_{-1}<0\leq x_0$ and for some constants $\eta,C>0$ we have
$|x_n-\eta n|<C$ for every $n\in\Z$.


\section{Geometric representations of infinite words}\label{sec:geomrep}


Our aim is now to clarify the relation of BDL property of a one-dimensional Delone set with finitely
many distances between its neighbors and the balancedness of the corresponding infinite word, or,
said differently, of an infinite word and its geometric representation.

For the definition of a geometric representation of an infinite word $\bu$ we use the Parikh vectors
of its prefixes. Denote for simplicity
\begin{equation}\label{eq:parikh}
  \Parikh[n]:=\left\{\begin{array}{@{}r@{}l@{\quad}l}
           \Parikh & (u_{[0,n)}) & \text{if } n\geq 0 \\[2mm]
           -\Parikh & (u_{[n,0)}) & \text{otherwise}.
  \end{array}\right.
\end{equation}

\begin{definition}\label{d:geomrep}
  Let $\bu=\cdots u_{-2}u_{-1}u_0u_1u_2\cdots$ be an infinite word over a finite alphabet
  $\A=\{1,\ldots,d\}$ and $\ell_1,\ldots,\ell_d$ be positive numbers.  Set
  \[
  x_n:=(\ell_1,\dots,\ell_d)\Parikh[n].
  \]
  The Delone set $\Lambda_{\bu}:=\{x_n : n\in\Z\}$ is called a geometric representation of
  $\bu$ defined by the lengths $\ell_1,\ldots,\ell_d$.  We say that the geometric
  representation is non-trivial, if the set of lengths has at least two elements.
\end{definition}

The above notion is illustrated in Figure~\ref{fig:geom_repr}.

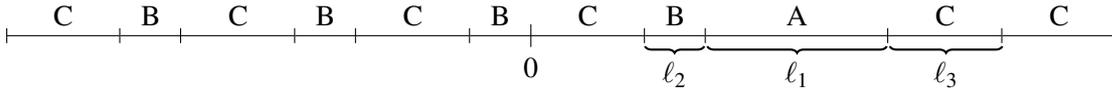
\begin{figure}[!htp]
    \centering
    \begin{tikzpicture}[x=2cm,y=1.5cm]

	\draw [|-|] (-1.15,0) -- node [anchor = south] {C} (-0.4,0);
          \draw [-|] (-0.4,0) -- node [anchor = south] {B} (0,0);
          \draw [-|] (0,0) -- node [anchor = south] {C} (0.75,0);
          \draw [-|] (0.75,0) -- node [anchor = south] {B} (1.15,0);
          \draw [-|] (1.15,0) -- node [anchor = south] {C} (1.9,0);
          \draw [-] (1.9,0) -- node [anchor = south] {B} (2.3,0);
          \draw [-|] (2.3,0) -- node [anchor = south] {C} (3.05,0);
          \draw [-|] (3.05,0) -- node [anchor = south] {B} (3.45,0);
          \draw [-|] (3.45,0) -- node [anchor = south] {A} (4.65,0);
          \draw [-|] (4.65,0) -- node [anchor = south] {C} (5.4,0);
          \draw [-|] (5.4,0) -- node [anchor = south] {C} (6.15,0);

          \draw (2.3,0.1) -- (2.3,-0.1) node [anchor = north] {0};


          \draw [thick,decorate,decoration={brace,mirror,raise=4pt}] (3.05,0) -- node [below=6pt] {$\ell_2$} (3.44,0);
	  \draw [thick,decorate,decoration={brace,mirror,raise=4pt}] (3.46,0) -- node [below=6pt] {$\ell_1$} (4.64,0);
	  \draw [thick,decorate,decoration={brace,mirror,raise=4pt}] (4.66,0) -- node [below=6pt] {$\ell_3$} (5.4,0);

        \end{tikzpicture}

        \caption{Geometric representation of (a part of) the infinite word $\cdots
          CBCBCB|CBACC\cdots$. The lengths $\ell_1,\ell_2,\ell_3$ correspond to letters $A,B,C$,
          respectively.}\label{fig:geom_repr}

\end{figure}


\begin{remark}
  Note that there are several cases of uninteresting geometric representations:
  \begin{enumerate}
  \item
    Every infinite word can be trivially represented by a lattice, if all lengths are chosen
    to be equal. In this case $\Lambda_{\bu}$ is a lattice itself.
  \item
    If there is a letter appearing in $\bu\in\A^{\Z}$ only finitely many times, say
    $a\in\A$ has $t$ occurrences in $\bu$, then the geometric representation of
    $\bu$ with $\ell_a=2$ and $\ell_b=1$ for all $b\in\A\setminus\{a\}$ is non-trivial.
    Moreover, for every $n\in\Z$ we have $|x_n-n|\leq t$, where
    $x_n=(\ell_1,\ldots,\ell_d)\Parikh[n]$. Therefore $\{x_n:n\in\Z\}$ is bounded distance
    equivalent to the lattice $\Z$.
  \item
    If $\bu$ is periodic with a period $w_1\cdots w_p\in\A^*$ then any choice of lengths
    $\ell_a$, $a\in\A$ such that $\ell_{w_1}+\ell_{w_2}+\cdots+\ell_{w_p}=p$ ensures that
    $|x_n-n|\leq p$ for every $n\in\Z$. Again, $\{x_n:n\in\Z\}$ is bounded distance
    equivalent to the lattice $\Z$.
  \end{enumerate}
  Therefore it is interesting to look for a non-trivial geometric BDL representation of a word
  $\bu$ only if $\bu$ is aperiodic such that every letter occurs in
  $\bu$ infinitely many times.
\end{remark}

It is a simple consequence of Proposition~\ref{p:frekvence} that every geometric representation of a
balanced infinite word is bounded distance equivalent to a lattice.

\begin{proposition}\label{p:anygeomBDL}
  Let $\bu$ be a bidirectional balanced infinite word over $\A$ and let $\varrho_a$,
  $a\in\A$ be frequencies of its letters. Then a geometric representation
  $\Lambda_{\bu}$ of $\bu$ defined by arbitrary choice of positive lengths $\ell_a$, $a\in\A$ is
  bounded distance equivalent to the lattice $\eta\Z$, where $\eta=\sum_{a\in\A}\ell_a\varrho_a$.
\end{proposition}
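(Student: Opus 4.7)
The plan is to unwind the definitions and reduce the statement to the characterization of one-dimensional BDL sets established in Lemma~\ref{l:1dBDL}, using the sharp quantitative bound on letter occurrences already proved in Proposition~\ref{p:frekvence}.

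First, I would verify that $\Lambda_{\bu}=\{x_n:n\in\Z\}$ is genuinely a Delone set: since each $\ell_a$ is positive and bounded above by $L:=\max_a \ell_a$, consecutive gaps $x_{n+1}-x_n=\ell_{u_n}$ lie in the interval $[\min_a\ell_a,L]$, so the set is both uniformly discrete and relatively dense. Also note $\eta=\sum_a \ell_a \varrho_a>0$ because at least one frequency is positive (in fact the frequencies sum to $1$).

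Next, I would exploit the defining identity
\[
x_n - \eta n = \sum_{a\in\A} \ell_a \bigl((\Parikh[n])_a - \varrho_a n\bigr).
\]
For $n\ge 0$, the word $w=u_{[0,n)}$ has length $n$ and by Proposition~\ref{p:frekvence} satisfies $\bigl||w|_a-\varrho_a n\bigr|\le c$ for every letter $a$, where $c$ is the balance constant of $\bu$. For $n<0$, the analogous inequality applies to the factor $u_{[n,0)}$ of length $-n$, and the sign change in definition~\eqref{eq:parikh} is precisely what is needed so that the same bound $\bigl|(\Parikh[n])_a-\varrho_a n\bigr|\le c$ holds. Therefore
\[
\bigl|x_n-\eta n\bigr| \le c\sum_{a\in\A}\ell_a =: C
\]
for every $n\in\Z$.

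Finally, I would conclude BDL either by invoking the consequence stated right after the proof of Lemma~\ref{l:1dBDL} (a one-dimensional Delone set $\{x_n\}$ with $|x_n-\eta n|<C$ is bounded distance equivalent to $\eta\Z$ via $x_n\mapsto \eta n$), or equivalently by translating the uniform bound into the discrepancy estimate~\eqref{eq:prevodBDL}: the points of $\Lambda_{\bu}$ in $[0,N)$ are precisely those indices $n\ge 0$ with $x_n<N$, and since $x_n$ differs from $\eta n$ by at most $C$, the count $\#(\Lambda_{\bu}\cap[0,N))$ differs from $N/\eta$ by at most $C/\eta+1$; the negative half is handled symmetrically. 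There is no real obstacle here — the only mildly delicate point is keeping track of the sign convention in~\eqref{eq:parikh} so that the balance estimate of Proposition~\ref{p:frekvence} applies uniformly to both positive and negative indices.
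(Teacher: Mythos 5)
Your argument is correct and follows essentially the same route as the paper: apply the bound $\bigl||w|_a-\varrho_a|w|\bigr|\leq c$ from Proposition~\ref{p:frekvence} to the prefixes $u_{[0,n)}$ and $u_{[-n,0)}$, multiply by $\ell_a$, sum over the alphabet to get $|x_n-\eta n|\leq c\sum_{a\in\A}\ell_a$, and conclude via Lemma~\ref{l:1dBDL}. Your added remarks on the Delone property and the sign convention in~\eqref{eq:parikh} are just explicit versions of points the paper leaves implicit.
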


\begin{proof}
  Let $\bu$ be a $c$-balanced infinite word. Proposition~\ref{p:frekvence} implies for
  any letter $a\in\A$ and any $n\in\N$ that
  \begin{equation}\label{eq:skorohustoty}
    -c \leq |u_{[0,n)}|_a-\varrho_a n \leq c\quad\text{ and }\quad -c \leq |u_{[-n,0)}|_a-\varrho_a n \leq c.
  \end{equation}
  Multiplying by $\ell_a$ and summing over $a\in\A$, we obtain for the geometric representation of
  $\bu$ given by the sequence $\{x_n : n\in\Z\}$ that
  \[
  -\sum_{a\in\A} c\ell_a \leq x_n - n \sum_{a\in\A}\ell_a\varrho_a \leq \sum_{a\in\A} c\ell_a,
  \]
  see Definition~\ref{d:geomrep}. Lemma~\ref{l:1dBDL} then gives the result.
\end{proof}


\section{Fixed points of morphisms and balancedness}\label{sec:subst}


A special class of infinite words is formed by fixed points of morphisms. Let $\A$ be an alphabet
and $\varphi:\A^*\to\A^*$ a morphism.  We say that an infinite word $\bu$ over $\A$ is a fixed point
of $\varphi$ if $\varphi(\bu)=\bu$.  A morphism $\varphi$ is called substitution if it is
non-erasing, i.e., $\varphi(a) \neq \epsilon$ for each $a \in \mathcal{A}$, and there exist letters
$a,b \in \mathcal{A}$ such that $\varphi(a) =aw$ and $\varphi(b)= vb$ for some non-empty words $w,v
\in \mathcal{A}^*$.  Obviously, the bidirectional infinite sequence
$\cdots \varphi^3(v) \varphi^2(v) \varphi(v) b |a \varphi(w) \varphi^2(w) \varphi^3(w)\cdots$
is a fixed point of the substitution $\varphi$.

If $\bu$ is a fixed point of a morphism $\varphi$, the incidence matrix of $\varphi$ brings certain
information on the infinite word $\bu$. The incidence matrix $M_\varphi$ is obviously non-negative
with integer elements, i.e., $M_\varphi\in\N^{d\times d}$ where $d=\#\A$. It follows that its
spectral radius $r_\varphi$ is an eigenvalue of $M_\varphi$.  To the eigenvalue $r_\varphi$ there
exists an eigenvector with non-negative components~\cite{Fiedler}.  If the morphisms $\varphi$ is
non-erasing, then $r_\varphi \geq 1$.

If $M_\varphi$ is a primitive matrix (i.e., there exists a power of $M_\varphi$ which is positive),
the Perron-Frobenius theorem states that the eigenvalue $r_\varphi$ is simple and the corresponding
eigenvector is positive. It is (up to scaling) a unique non-negative eigenvector of $M_\varphi$. In
this case we say that $\varphi$ is a primitive morphism.

It is known~\cite{queffelec} that for a fixed point $\bu$ of a primitive morphism $\varphi$ the
frequencies of letters form an eigenvector
$\varrho_{\bu}=(\varrho_{a_1},\dots,\varrho_{a_d})^{\mathrm{T}}$ of the matrix $M_{\varphi}$ to its
spectral radius $r_{\varphi}$.
The transposed matrix $M_\varphi^{\mathrm{T}}$ has also $r_\varphi$ as an eigenvalue with a positive
eigenvector. If $\ell=(\ell_1,\dots,\ell_d)^{\mathrm{T}}$ is such an eigenvector, then it can be
used for a geometric representation of the fixed point $\bu$ of $\varphi$. Moreover, the geometric
representation $\Lambda_{\bu}$ obtained in this way is self-similar with the scaling factor
$r_\varphi$, i.e., $r_{\varphi}\Lambda_{\bu}\subset\Lambda_{\bu}$.

\begin{example}\label{ex:selfsimgeomrep}
  Consider the morphism $\varphi:\{A,B\}^*\to\{A,B\}^*$, given by $\varphi(A)=AAB$ and
  $\varphi(B)=AB$. The morphism $\varphi$ has the following fixed point
  \[
  \bu = \cdots AABABAABAB|AABAABABAABAABAB\cdots.
  \]
  The incidence matrix of the morphism $\varphi$ is
  ${M}_{\varphi}=\left(\begin{smallmatrix}2 & 1\\ 1 & 1 \end{smallmatrix}\right)$, its eigenvalues
  are $\frac12(3+\sqrt5)=\tau^2\sim 2.618$ and $\frac12(3-\sqrt5)=1/\tau^2\sim0.382$, where
  $\tau=\frac{1}{2}(1+\sqrt5)$ is the golden ratio.  The frequencies of letters $A,B$ in the
  infinite word $\bu$ are obtained as the components of the normalized positive eigenvector of
  $M_\varphi$, i.e., $\varrho_A=\tau^{-1}$, $\varrho_B=\tau^{-2}$. Since the matrix $M_\varphi$ is
  symmetric, the eigenvector $\varrho_{\bu}=(\tau^{-1},\tau^{-2})^{\mathrm{T}}$ can also be used to
  obtain a self-similar geometric representation of the infinite word $\bu$, see
  Figure~\ref{fig:geom_repr_(-tau)}.
\end{example}

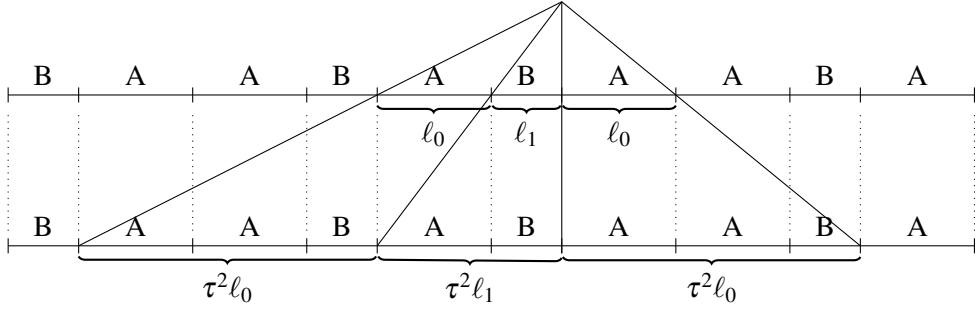
\begin{figure}[!htp]
  \centering

  \begin{tikzpicture}[x=1.5cm,y=2cm]

    \draw (-4.854,1)
      -- node [above] {B} (-4.236,1)
      -- node [above] {A} (-3.236,1)
      -- node [above] {A} (-2.236,1)
      -- node [above] {B} (-1.618,1)
      -- node [above] {A} (-0.618,1)
      -- node [above] {B} (0,1)
      -- node [above] {A} (1,1)
      -- node [above] {A} (2,1)
      -- node [above] {B} (2.618,1)
      -- node [above] {A} (3.618,1);

    \draw (-4.854,0)
      -- node [above] {B} (-4.236,0)
      -- node [above] {A} (-3.236,0)
      -- node [above] {A} (-2.236,0)
      -- node [above] {B} (-1.618,0)
      -- node [above] {A} (-0.618,0)
      -- node [above] {B} (0,0)
      -- node [above] {A} (1,0)
      -- node [above] {A} (2,0)
      -- node [above] {B} (2.618,0)
      -- node [above] {A} (3.618,0);

    \draw (-4.854,0.95)--(-4.854,1.05);
    \draw (-4.236,0.95)--(-4.236,1.05);
    \draw (-3.236,0.95)--(-3.236,1.05);
    \draw (-2.236,0.95)--(-2.236,1.05);
    \draw (-1.618,0.95)--(-1.618,1.05);
    \draw (-0.618,0.95)--(-0.618,1.05);
    \draw (0,0.95)--(0,1.05);
    \draw (1,0.95)--(1,1.05);
    \draw (2,0.95)--(2,1.05);
    \draw (2.618,0.95)--(2.618,1.05);
    \draw (3.618,0.95)--(3.618,1.05);
	
    \draw (-4.854,-0.05)--(-4.854,0.05);
    \draw (-4.236,-0.05)--(-4.236,0.05);
    \draw (-3.236,-0.05)--(-3.236,0.05);
    \draw (-2.236,-0.05)--(-2.236,0.05);
    \draw (-1.618,-0.05)--(-1.618,0.05);
    \draw (-0.618,-0.05)--(-0.618,0.05);
    \draw (0,-0.05)--(0,0.05);
    \draw (1,-0.05)--(1,0.05);
    \draw (2,-0.05)--(2,0.05);
    \draw (2.618,-0.05)--(2.618,0.05);
    \draw (3.618,-0.05)--(3.618,0.05);	
	
    \draw (0,-0.05) -- (0,1.618);
    \draw (0,1.618) -- (2.618,0);
    \draw (0,1.618) -- (-1.618,0);
    \draw (0,1.618) -- (-4.236,0);

    \draw [dotted] (-4.854,0.1) -- (-4.854,0.9);
    \draw [dotted] (-4.236,0.1) -- (-4.236,0.9);
    \draw [dotted] (-3.236,0.1) -- (-3.236,0.9);
    \draw [dotted] (-2.236,0.1) -- (-2.236,0.9);
    \draw [dotted] (-1.618,0.1) -- (-1.618,0.9);
    \draw [dotted] (-0.618,0.1) -- (-0.618,0.9);
    \draw [dotted] (1,0.1) -- (1,0.9);
    \draw [dotted] (2,0.1) -- (2,0.9);
    \draw [dotted] (2.618,0.1) -- (2.618,0.9);
    \draw [dotted] (3.618,0.1) -- (3.618,0.9);


    \draw [thick,decorate,decoration={brace,mirror,raise=4pt}]
      (-1.618,1) -- node [below=6pt] {$\ell_0$} (-0.628,1);
    \draw [thick,decorate,decoration={brace,mirror,raise=4pt}]
      (-0.608,1) -- node [below=6pt] {$\ell_1$} (-0.01,1);
    \draw [thick,decorate,decoration={brace,mirror,raise=4pt}]
      (0.01,1) -- node [below=6pt] {$\ell_0$} (1,1);
	
    \draw [thick,decorate,decoration={brace,mirror,raise=4pt}]
      (-4.236,0) -- node [below=6pt] {$\tau^2\ell_0$} (-1.628,0);
    \draw [thick,decorate,decoration={brace,mirror,raise=4pt}]
      (-1.608,0) -- node [below=6pt] {$\tau^2\ell_1$} (0,-0.01);
    \draw [thick,decorate,decoration={brace,mirror,raise=4.2pt}]
      (0.01,0) -- node [below=6pt] {$\tau^2\ell_0$} (2.618,0);	
	
  \end{tikzpicture}
	
  \caption{Self-similarity of the geometric representation of
		$\bu$.}\label{fig:geom_repr_(-tau)}
\end{figure}

In~\cite{AdamczBalance}, Adamczewski provides a description of the balance property of fixed points
of primitive morphisms. We include here only a simplified version of his result.

\begin{theorem}[\cite{AdamczBalance}]\label{t:Adam}
  Let $\bu$ be a fixed point of a primitive morphism $\varphi$ and $\sigma_{\varphi}$ be
  the spectrum of $M_{\varphi}$.
  \begin{itemize}
  \item
    If $|\lambda|<1$ for all $\lambda\in\sigma_{\varphi}\setminus\{r_{\varphi}\}$, then
    $\bu$ is balanced;
  \item
    if $|\lambda|>1$ for some $\lambda\in\sigma_{\varphi}\setminus\{r_{\varphi}\}$,
    then $\bu$ is not balanced.
  \end{itemize}
\end{theorem}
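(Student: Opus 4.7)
The plan is to track, for every factor $w \in \L(\bu)$, the deviation of the Parikh vector $\Parikh(w)$ from $|w|\varrho_{\bu}$ by combining a hierarchical decomposition induced by $\varphi$ with the spectral behaviour of $M_\varphi$. By Proposition~\ref{p:frekvence} and a direct computation, balancedness of $\bu$ is equivalent to the boundedness, uniformly in $w\in\L(\bu)$, of the projection of $\Parikh(w)$ onto the $M_\varphi$-invariant complement $E_{<}$ of the Perron eigenline $\R\varrho_{\bu}$.

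For the sufficient direction, the first step is a desubstitution lemma: since $\bu$ is a fixed point of the primitive morphism $\varphi$, every sufficiently long factor $w\in\L(\bu)$ can be written as $w=p\,\varphi(w')\,s$, where $w'\in\L(\bu)$ and $p,s$ are respectively a proper suffix and a proper prefix of images $\varphi(c),\varphi(c')$ for some $c,c'\in\A$. With $\mu:=\max_{c\in\A}|\varphi(c)|$ this gives $\Parikh(w)=M_\varphi\Parikh(w')+r$ with $\|r\|_\infty\le 2\mu$. Iterating yields
\[
\Parikh(w)=M_\varphi^{k}\Parikh(w_k)+\sum_{j=0}^{k-1}M_\varphi^{j}r_j,\qquad \|r_j\|_\infty\le 2\mu,
\]
with $k$ chosen of order $\log_{r_\varphi}|w|$ so that $|w_k|$ is uniformly bounded. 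Projecting this identity onto $E_{<}$ along $\R\varrho_{\bu}$ and invoking the hypothesis that the spectral radius of $M_\varphi$ restricted to $E_{<}$ is strictly less than $1$, the operator series $\sum_{j\ge 0}M_\varphi^{j}|_{E_{<}}$ converges in operator norm. The $E_{<}$-component of $\Parikh(w)$ is therefore uniformly bounded, which translates into a uniform bound on $\bigl||w|_a-\varrho_a|w|\bigr|$ for every $a\in\A$, whence balance follows by the triangle inequality applied to any pair of factors of equal length.

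For the necessary direction, suppose $\lambda\in\sigma_\varphi\setminus\{r_\varphi\}$ satisfies $|\lambda|>1$, and let $v_\lambda$ be a corresponding (generalized) right eigenvector of $M_\varphi$. The strategy is to exhibit two factors of $\bu$ of equal length whose Parikh vectors differ by a quantity of exact order $|\lambda|^n$ along $v_\lambda$. Writing $e_a=\alpha_a\varrho_{\bu}+\beta_a v_\lambda+\cdots$ in the spectral basis of $M_\varphi$, one has $\Parikh(\varphi^n(a))=M_\varphi^n e_a$, and the $v_\lambda$-component equals $\beta_a\lambda^n$. Select two letters $a,a'\in\A$ with $\beta_a\neq\beta_{a'}$, which is possible because primitivity together with $v_\lambda\neq 0$ precludes all $\beta_c$, $c\in\A$, from coinciding. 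Locating occurrences of $\varphi^n(a)$ and $\varphi^n(a')$ inside $\bu$ (which primitivity guarantees for large $n$), then padding and trimming to equalize lengths, produces two factors whose Parikh-vector difference has leading term $M_\varphi^n(e_a-e_{a'})$ of magnitude $|\lambda|^n$, contradicting balance for large $n$.

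The principal obstacle lies in the necessity direction: equalizing lengths of the two candidate factors can introduce boundary corrections whose Parikh vectors grow as $O(r_\varphi^n)$, potentially swamping the $|\lambda|^n$ signal. Keeping them under control demands a careful choice of starting positions within the hierarchical structure of $\bu$---for instance via Dumont--Thomas numeration of positions---so that the compensating boundary words live in a proper sub-hierarchy $\varphi^{n-1}(\cdot)$ and contribute only at scale $|\lambda|^{n-1}$, preserving the leading $|\lambda|^n$ discrepancy along $v_\lambda$.
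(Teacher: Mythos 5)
First, a point of reference: the paper does not prove Theorem~\ref{t:Adam} at all --- it is imported from \cite{AdamczBalance} as a known result --- so there is no in-paper proof to compare against. Judged on its own terms, your sufficiency half is essentially correct and is the standard argument: desubstitute $w=p\,\varphi(w')\,s$, iterate, project onto the $M_\varphi$-invariant complement of the Perron line (which, under the hypothesis, is contracting), and sum a geometric series; this is also the mechanism the paper itself reuses in Lemma~\ref{l:matice}.

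The necessity half has a genuine gap, and it is exactly the one you flag without resolving. Padding $\varphi^n(a)$ to match the length of $\varphi^n(a')$ requires a word $v$ of length $\bigl|\,|\varphi^n(a)|-|\varphi^n(a')|\,\bigr|$, which is of order $r_\varphi^{\,n}$ in general, since the leading coefficients of $|\varphi^n(a)|\sim c_a r_\varphi^{\,n}$ differ between letters. The deviation $\Parikh(v)-|v|\varrho_{\bu}$ of such a padding word is governed by precisely the phenomenon you are trying to establish and can itself be of order $|\lambda|^{n}$ --- the same order as your signal, not $|\lambda|^{n-1}$. The claim that a careful choice of positions confines the boundary words to a ``proper sub-hierarchy'' contributing only at scale $|\lambda|^{n-1}$ is asserted, not proved, and is not obviously true.

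The gap is avoidable because the two-factor comparison is unnecessary. You already cite the right tool in your opening paragraph: by Proposition~\ref{p:frekvence}, if $\bu$ were balanced then its letter frequencies would exist (and by primitivity equal the normalized Perron eigenvector $\varrho_{\bu}$), and $\|\Parikh(w)-|w|\varrho_{\bu}\|$ would be bounded uniformly over \emph{single} factors $w\in\L(\bu)$; no pairing of equal-length factors is needed to contradict balance. Now let $\pi_\lambda$ be the spectral projection onto the generalized eigenspace of $\lambda$. It annihilates $\varrho_{\bu}$, and $\pi_\lambda\bigl(\Parikh(\varphi^n(a))\bigr)=M_\varphi^{\,n}\pi_\lambda(e_a)$. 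Since the vectors $e_a$ span $\R^d$ and $\pi_\lambda\neq0$, some letter $a$ has $\pi_\lambda(e_a)\neq0$, and $M_\varphi$ restricted to that eigenspace has all eigenvalues of modulus $>1$, so $\|M_\varphi^{\,n}\pi_\lambda(e_a)\|\to\infty$; as $\varphi^n(a)\in\L(\bu)$ by primitivity, $\bu$ is not balanced. This also makes your discussion of finding $a,a'$ with $\beta_a\neq\beta_{a'}$ superfluous (that claim is in fact true, but the reason is Perron--Frobenius: a coordinate functional proportional to $(1,\dots,1)$ would be a nonnegative left eigenvector for $\lambda\neq r_\varphi$, which primitivity forbids --- not merely ``$v_\lambda\neq0$'').
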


Note that Adamczewski describes balance property of fixed points even in case that the morphism has
an eigenvalue in modulus equal to one.


\section{The BDL  property of fixed points of substitutions}\label{sec:BDLsubst}


By Proposition~\ref{p:anygeomBDL}, any geometric representation of a balanced infinite word is BDL.
A natural question to ask is if any Delone set which has finitely many distances between neighboring
elements and is bounded distance equivalent to a lattice is coded by an infinite word which is
balanced.  We will show that this is not the case. In fact we provide a class of infinite words
which are not balanced, but have a non-trivial geometric representation which is BDL.  The class is
given by fixed points of substitutions.

First let us reformulate the BDL property of a geometric representation
$\Lambda_{\bu}=\{x_n:n\in\Z\}$ of an infinite word $\bu$ using vectors $\Parikh[n]$ given
by~(\ref{eq:parikh}).  Recall that $\{x_n:n\in\Z\}$ is BDL to a lattice if there exist $\eta, C>0$
such that $|x_n-\eta n|<C$ for all $n\in\Z$. Realizing that $n=(1,\dots,1)\Parikh[n]$, one can
reformulate the BDL property.

\begin{lemma}\label{l:vrstva}
  Let $\bu$ be a bidirectional infinite word over an alphabet $\A=\{1,\ldots,d\}$.
  Then the geometric
  representation of $\bu$ with positive lengths $\ell_1,\ldots,\ell_d$ is bounded distance
  equivalent to the lattice $\eta\Z$ if and only if for some constant $C$
  \[
  \Big|(\ell_1-\eta,\dots,\ell_d-\eta)\Parikh[n]\Big|<C\qquad\text{ for every } n\in\Z.
  \]
\end{lemma}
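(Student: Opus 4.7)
The plan is to reduce the lemma directly to the criterion stated immediately after the proof of Lemma~\ref{l:1dBDL}, namely that a one-dimensional Delone set $\Lambda$ is bounded distance equivalent to the lattice $\eta\Z$ (with $\eta>0$) if and only if $\Lambda$ admits an enumeration $\Lambda=\{x_n:n\in\Z\}$ with $x_{-1}<0\leq x_0$ for which there exists a constant $C$ with $|x_n-\eta n|<C$ for every $n\in\Z$. Once this criterion is in hand, the lemma becomes a purely algebraic rewriting using the definition of $\Parikh[n]$.

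First I would verify that the enumeration $\Lambda_{\bu}=\{x_n:n\in\Z\}$ supplied by Definition~\ref{d:geomrep} is the correct one for applying the criterion. Since all $\ell_a>0$, the map $n\mapsto x_n$ is strictly increasing; the convention~\eqref{eq:parikh} gives $\Parikh[0]=\vec 0$, hence $x_0=0$, and $\Parikh[-1]=-\Parikh(u_{-1})$, hence $x_{-1}=-\ell_{u_{-1}}<0$. So $\{x_n\}$ satisfies the shape assumption required by the criterion.

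Next I would isolate the key identity $(1,1,\dots,1)\Parikh[n]=n$ for every $n\in\Z$. For $n\geq 0$ this is just $|u_{[0,n)}|=n$; for $n<0$ the minus sign in~\eqref{eq:parikh} turns $|u_{[n,0)}|=-n$ into $n$. Using this identity together with $x_n=(\ell_1,\dots,\ell_d)\Parikh[n]$, one gets
\[
x_n-\eta n=(\ell_1,\dots,\ell_d)\Parikh[n]-\eta\,(1,\dots,1)\Parikh[n]=(\ell_1-\eta,\dots,\ell_d-\eta)\Parikh[n].
\]

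The lemma now follows at once: the condition $|x_n-\eta n|<C$ for all $n\in\Z$ coincides, via the displayed equation, with the condition in the statement. Applying the criterion recalled above in both directions yields the equivalence. No genuine obstacle arises here; the lemma is simply the translation of the one-dimensional BDL characterization into the language of Parikh vectors, and the only thing to check carefully is the sign convention in~\eqref{eq:parikh}, which is what makes the identity $(1,\dots,1)\Parikh[n]=n$ hold uniformly for positive and negative $n$.
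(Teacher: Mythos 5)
Your proposal is correct and follows exactly the route the paper intends: the paper states this lemma without a separate proof, justifying it by the remark preceding it (the identity $n=(1,\dots,1)\Parikh[n]$) together with the characterization of one-dimensional BDL sets derived from Lemma~\ref{l:1dBDL}, and your argument is a careful fleshing-out of precisely that reduction, including the worthwhile check that the enumeration from Definition~\ref{d:geomrep} is increasing with $x_{-1}<0\leq x_0$.
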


\begin{lemma}\label{l:matice}
  Let $\bu$ be a fixed point of a substitution $\varphi$ over a $d$-letter alphabet $\A$ and suppose
  that the incidence matrix of $\varphi$ has at least one eigenvalue in modulus less than 1. Then
  there exist a vector $f\in\R^d$ linearly independent of $(1,1,\dots,1)^{\mathrm{T}}$ and a
  constant $K>0$ such that $|f^{\mathrm{T}}\Parikh[n]|\leq K$ for every $n\in\Z$.
\end{lemma}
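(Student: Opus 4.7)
The natural candidate for $f$ is a left eigenvector of $M_\varphi$ associated with an eigenvalue $\lambda$ of modulus less than $1$; if $\lambda$ is non-real, one works with the complex left eigenvector first and extracts $f$ as its real or imaginary part at the end. To see that such $f$ cannot be a scalar multiple of $(1,1,\ldots,1)^{\mathrm{T}}$, observe that $(1,1,\ldots,1)M_\varphi = (|\varphi(a_1)|,\ldots,|\varphi(a_d)|)$ has positive integer entries, so $(1,1,\ldots,1)^{\mathrm{T}}$ could only be a left eigenvector for an eigenvalue of modulus at least $1$, which rules out our $\lambda$.

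The core step exploits the self-similarity $\bu=\varphi(\bu)$. For $n\geq 0$, write $u_{[0,n)}=\varphi(u_{[0,k)})\,p$, where $k$ is the largest index with $L_k:=|\varphi(u_0\cdots u_{k-1})|\leq n$ and $p$ is a proper prefix of $\varphi(u_k)$, so $|p|<\mu:=\max_{a\in\A}|\varphi(a)|$. Applying $\Parikh$ gives $\Parikh[n]=M_\varphi\Parikh[k]+\Parikh(p)$ and hence
\[
  f^{\mathrm{T}}\Parikh[n] \;=\; \lambda\, f^{\mathrm{T}}\Parikh[k] \;+\; f^{\mathrm{T}}\Parikh(p).
\]
I would then iterate, setting $n_0=n$ and letting $n_{j+1}$ be the $k$ produced from $n_j$. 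The substitution axiom forces $u_0=a$ with $\varphi(a)=aw$, $w\neq\epsilon$, so $|\varphi(u_0)|\geq 2$ and therefore $L_k\geq k+1$ for all $k\geq 1$; this is exactly what guarantees $n_{j+1}<n_j$ whenever $n_j\geq 1$, so the sequence terminates at $n_J=0$, where $\Parikh[n_J]=0$. Unrolling,
\[
  f^{\mathrm{T}}\Parikh[n] \;=\; \sum_{j=1}^{J}\lambda^{\,j-1}\, f^{\mathrm{T}}\Parikh(p_j),
\]
and setting $B:=\max\bigl\{|f^{\mathrm{T}}\Parikh(p)| : p\in\A^*,\ |p|<\mu\bigr\}$ (a maximum over finitely many words) yields $|f^{\mathrm{T}}\Parikh[n]|\leq B/(1-|\lambda|)$, independent of $n$.

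The negative-index case is entirely symmetric: the substitution axiom provides $u_{-1}=b$ with $\varphi(b)=vb$, $v\neq\epsilon$, so the analogous lengths $L^-_k:=|\varphi(u_{-k})\cdots\varphi(u_{-1})|$ satisfy $L^-_k\geq k+1$. A decomposition $u_{[n,0)}=q\,\varphi(u_{[-k,0)})$ combined with the sign convention in~\eqref{eq:parikh} produces the same recursion with $p_j$ replaced by $-q_j$, hence the same bound. For complex $\lambda$ one replaces $f$ by the complex eigenvector $f_{\mathbb{C}}$ throughout, obtains the bound on $|f_{\mathbb{C}}^{\mathrm{T}}\Parikh[n]|$, and then takes whichever of $\operatorname{Re} f_{\mathbb{C}}$, $\operatorname{Im} f_{\mathbb{C}}$ is linearly independent of $(1,\ldots,1)^{\mathrm{T}}$ (at least one must be, by the same parity-of-$|\lambda|$ argument applied to $f_{\mathbb{C}}$). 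I do not expect a serious obstacle; the only place requiring genuine care is the strict decrease $|n_{j+1}|<|n_j|$, which is precisely why the substitution hypothesis (rather than mere non-erasingness) is needed.
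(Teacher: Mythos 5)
Your proof is correct, and it takes a genuinely different route from the paper's. The paper splits $\R^d$ into the $M_\varphi$-invariant subspaces $V_1$ (eigenvalues of modulus $\geq 1$) and $V_0$ (eigenvalues of modulus $<1$), equips $V_0$ with an adapted operator norm in which $M_\varphi$ acts as a contraction, and proves by induction on $n$ --- using the same desubstitution $u_{[0,n)}=\varphi(u_{[0,m)})p$ that you use --- that the $V_0$-component $Z_n$ of $\Parikh[n]$ stays in a fixed ball; it then takes $f$ annihilating $V_1$, with a case split on $\dim V_1$ to guarantee that $f$ has two distinct components (when $\dim V_1=d-1$ it argues via orthogonality of $f$ to the non-negative Perron eigenvector of $M_\varphi^{\mathrm{T}}$). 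You instead fix a single left eigenvector $f$ for one eigenvalue $\lambda$ with $|\lambda|<1$ and unroll the recursion $f^{\mathrm{T}}\Parikh[n]=\lambda f^{\mathrm{T}}\Parikh[k]+f^{\mathrm{T}}\Parikh(p)$ into a geometric series bounded by $B/(1-|\lambda|)$. This is more elementary --- it dispenses with the adapted norm and the invariant-subspace bookkeeping --- and your independence argument (since $(1,\dots,1)M_\varphi$ has positive integer entries, $(1,\dots,1)$ could only be a left eigenvector for an eigenvalue $\geq 1$) is cleaner than the paper's case analysis and extends verbatim to the complex case. What the paper's version buys is slightly more: boundedness of the whole projection $Z_n$, hence a $(d-\dim V_1)$-dimensional space of admissible functionals rather than one per contracting eigenvalue, which gives more freedom later when converting $f$ into a length vector. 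Both proofs hinge on the same delicate point you correctly isolate, namely the strict decrease $k<n$ in the desubstitution step; the paper simply asserts the existence of $m<n$, whereas your observation that $|\varphi(u_0)|\geq 2$ and $|\varphi(u_{-1})|\geq 2$ for the canonical bidirectional fixed point (forcing $L_k\geq k+1$) is exactly the justification needed, so on this point your write-up is in fact more complete than the original.
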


\begin{proof}
  Let $M\in\N^{d\times d}$ be the incidence matrix of $\varphi$. Since $\varphi$ is non-erasing
  there is a non-zero eigenvalue of $M$. Moreover, as $M$ is an integer matrix, there exists
  an eigenvalue of $M$ which is in modulus greater than or equal to $1$.

  Write $\R^d$ as a direct sum $\R^d=V_1\oplus V_0$ where $V_1$, $V_0$ are invariant subspaces of
  $M$ such that $M$ restricted to $V_1$ has all eigenvalues in modulus $\geq 1$ and $M_\varphi$
  restricted to $V_0$ has all eigenvalues in modulus $< 1$.  Necessarily $V_0\neq\{0\}$, $V_1\neq\{0\}$.

  Denote $\beta:=\max\big\{|\lambda|: \lambda\text{ is an eigenvalue of } M,\ |\lambda|<1\big\}$.
  For each $\Parikh[n]$, denote $Y_n\in V_1$, $Z_n\in V_0$ such that $\Parikh[n]=Y_n+Z_n$. Choose
  $\varepsilon>0$ such that $\beta+\varepsilon<1$. By Theorem 3 from~\cite{isaacson} there exists a
  norm $\|\cdot\|_0$ on the space $V_0$ such that for every $Z\in V_0$, we have $\|M Z\|_0\leq
  (\beta+\varepsilon)\|Z\|_0$. We show that there exists a constant $c>0$ such that
  \begin{equation}\label{eq:vektory1}
    \|Z_n\|_0\leq c \qquad \text{ for every } n\in\Z.
  \end{equation}
  In order to obtain the constant $c$ denote
  \begin{align*}
    {\mathcal P} & :=\{p\in\A^* : p \text{ is a proper prefix or a proper suffix of }\varphi(a),\ a\in\A\},\\
    \gamma &:= \max\big\{ \|Z\|_0 : Z\in V_0, \text{ such that } \exists\, Y\in V_1,\exists p\in{\mathcal P},
    \Parikh(p)=Y+Z \big\}.
  \end{align*}
  Find $c>0$ so that
  \begin{equation}\label{eq:vektory2}
    (\beta+\varepsilon)c+\gamma \leq c.
  \end{equation}
  This is possible, by the choice of $\varepsilon$. We will show~\eqref{eq:vektory1} by induction on
  $n$. For $n=0$, trivially $\Parikh[0]=\Parikh(\epsilon)=0$, and so $Z_0=0$. Let $n> 0$.
  Then there exist
  $m<n$ and $p\in{\mathcal P}$ such that $u_{[0,n)}=\varphi(u_{[0,m)})p$. Since
  $\Parikh\big(\varphi(w)\big)=M\Parikh(w)$, for every $w\in\A^*$, we derive
  $\Parikh[n]=M\Parikh[m]+\Parikh(p)$. If $Y\in V_1$, $Z\in V_0$ are such that $\Parikh(p)=Y+Z$, then
  $\Parikh[n]=M Y_m + M Z_m + Y + Z$. The decomposition of a vector to components in the invariant
  subspaces $V_0$, $V_1$ is unique and thus $Z_n=MZ_m + Z$. This implies
  \[
  \|Z_n\|_0 \leq \|MZ_m\|_0 + \|Z\|_0 \leq (\beta+\varepsilon)\|Z_m\|_0 + \gamma.
  \]
  The induction hypothesis and the choice of $c$ by~\eqref{eq:vektory2} gives
  \[
  \|Z_n\|_0 \leq (\beta+\varepsilon)c + \gamma\leq c.
  \]
  The proof of~\eqref{eq:vektory1} for $n<0$ is analogous. Since on a finitely dimensional space
  all norms are equivalent, we have thus shown that the set $\{Z_n:n\in\Z\}$ is bounded in any norm.

  Consider the subspace of vectors $f\in\R^d$ such that
  \begin{equation}\label{eq:vektory3}
    f^{\mathrm{T}} Y=0 \qquad \text{ for every } Y\in V_1.
  \end{equation}
  If $\dim V_1<d-1$, the subspace is of dimension $\geq 2$. Therefore one can find a vector
  $f\in\R^d$ with at least two distinct components.

  If $\dim V_1=d-1$, then $\dim V_0=1$ and there is a unique eigenvalue $\lambda$ of the matrix $M$,
  such that $|\lambda|=\beta<1$. Necessarily, $\lambda$ is a simple real eigenvalue of $M$ and any
  non-zero $f$ satisfying~\eqref{eq:vektory3} is an eigenvector to the eigenvalue $\lambda$. Choose
  one such $f$. Since $M^{\mathrm{T}}$ is a non-negative matrix, its spectral radius is its
  eigenvalue and there exists a non-negative eigenvector $v$ corresponding to this
  eigenvalue~\cite{Fiedler}. As $\lambda$ is in modulus less than $1$, $\lambda$ is not equal to the
  spectral radius of $M$, and thus the vector $f$ is orthogonal to the non-negative vector
  $v$. Therefore $f$ cannot have all components equal. Since by~\eqref{eq:vektory1} the set
  $\{Z_n:n\in\Z\}$ is bounded in $\R^d$, necessarily $\{f^{\mathrm{T}} Z_n : n\in\Z\}$ is a bounded
  set in $\R$, i.e., there exists a constant $K>0$ such that
  \[
  \big|f^{\mathrm{T}} \Parikh[n]\big| = |f^{\mathrm{T}} (Y_n+Z_n)| = |f^{\mathrm{T}} Z_n|\leq K,
  \]
  which we had to show.
\end{proof}

\begin{proposition}\label{prop:BDL_repr_of_fixed_point}
  Let $\varphi$ be a substitution over an alphabet $\A$ and suppose that its incidence matrix has at
  least one eigenvalue in modulus less than 1.  Let $\bu$ be a bidirectional fixed point of
  $\varphi$. Then there exists a non-trivial geometric representation of $\bu$ which is bounded
  distance equivalent to a lattice.
\end{proposition}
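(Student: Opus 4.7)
The plan is to combine Lemma~\ref{l:matice} with Lemma~\ref{l:vrstva}. By Lemma~\ref{l:matice}, the assumption that $M_{\varphi}$ has an eigenvalue in modulus less than $1$ yields a vector $f = (f_1, \ldots, f_d)^{\mathrm T} \in \R^d$ linearly independent of $(1, \ldots, 1)^{\mathrm T}$ together with a constant $K > 0$ satisfying $|f^{\mathrm T}\Parikh[n]| \leq K$ for every $n \in \Z$. I would then try to choose positive lengths $\ell_1, \ldots, \ell_d$ and a positive number $\eta$ so that the vector $(\ell_1 - \eta, \ldots, \ell_d - \eta)$ is a nonzero scalar multiple of $f^{\mathrm T}$; the criterion of Lemma~\ref{l:vrstva} would then deliver the bounded distance equivalence of $\Lambda_{\bu}$ to the lattice $\eta\Z$ with essentially no further work.

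Concretely, I would fix any $\eta > 0$ and set $\ell_i := \eta + \alpha f_i$ for $i = 1, \ldots, d$, where $\alpha \neq 0$ is a scalar to be adjusted. Choosing $\alpha$ sufficiently small (for instance any $\alpha$ with $0 < |\alpha| < \eta/(1 + \max_i |f_i|)$) guarantees $\ell_i > 0$ for every $i$, so these numbers are admissible lengths in the sense of Definition~\ref{d:geomrep}. Because $f$ is linearly independent of $(1, \ldots, 1)^{\mathrm T}$, its components cannot all be equal, and hence (as $\alpha \neq 0$) the lengths $\ell_1, \ldots, \ell_d$ also fail to be all equal. This is precisely the non-triviality condition required by Definition~\ref{d:geomrep}.

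With this choice,
\[
(\ell_1 - \eta, \ldots, \ell_d - \eta)\Parikh[n] \;=\; \alpha\, f^{\mathrm T}\Parikh[n],
\]
which is bounded in absolute value by $|\alpha|K$ uniformly in $n \in \Z$. Lemma~\ref{l:vrstva} then certifies that the resulting geometric representation $\Lambda_{\bu}$ is bounded distance equivalent to the lattice $\eta\Z$, which is the conclusion of the proposition. I do not expect a real obstacle here: all the analytic difficulty is absorbed into Lemma~\ref{l:matice}, and the remaining argument amounts to the algebraic observation that the bounded covector $f^{\mathrm T}$ produced by that lemma can be translated and rescaled into the coordinate form $(\ell_1 - \eta, \ldots, \ell_d - \eta)$ with all $\ell_i > 0$ and not all equal.
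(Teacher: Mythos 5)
Your proposal is correct and follows essentially the same route as the paper: combine Lemma~\ref{l:matice} with Lemma~\ref{l:vrstva}, turning the bounded covector $f$ into lengths of the form $\ell_i=f_i+\eta$ (the paper simply takes $\alpha=1$ and chooses $\eta$ large enough to make all $\ell_i$ positive, whereas you fix $\eta$ and shrink $\alpha$ --- an equivalent rescaling). The non-triviality argument via the linear independence of $f$ from $(1,\dots,1)^{\mathrm{T}}$ matches the paper's as well.
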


\begin{proof}
  Denote $d:=\#\A$. By Lemma~\ref{l:matice}, there exists a vector
  $f=(f_1,\dots,f_d)^{\mathrm{T}}\in\R^d$ with at least two distinct components such that
  $f^{\mathrm{T}}\Parikh[n]$ is bounded, where $\Parikh[n]$ is defined by~\eqref{eq:parikh}. Take
  any $\eta>0$ such that $f+\eta(1,1,\dots,1)^{\mathrm{T}}=(f_1+\eta,\dots,f_d+\eta)^{\mathrm{T}}$
  is a positive vector.  By Lemma~\ref{l:vrstva}, the geometric representation of $\bu$ with lengths
  $\ell_i=f_i+\eta$, $i=1,\dots,d$, is bounded distance equivalent to the lattice $\eta\Z$. Since
  the vector $f$ has at least two distinct components, the representation is non-trivial.
\end{proof}

The following example illustrates that also non-balanced infinite words may have non-trivial
geometric representation which is BDL.

\begin{example}
  Let us consider morphism $\varphi: \{A,B,C\}^*\to\{A,B,C\}^*$ given by $A\mapsto C$, $B\mapsto
  ACCCC$, and $C\mapsto CB$. Note that $\varphi$ is not a substitution but $\varphi^2$ is. Denote by
  $\bu$ the infinite word obtained by iteration of $\varphi^2$ starting from the pair $B|C$. Since
  ${M}_{\varphi}$ has an eigenvalue $\lambda\sim-0.274$ (and thus ${M}_{\varphi^2}={M}_{\varphi}^2$
  has an eigenvalue $\lambda^2$ such that $|\lambda^2|<1$) we can use
  Proposition~\ref{prop:BDL_repr_of_fixed_point} to find a geometric representation of $\bu$ which
  is BDL. The lengths of this representation are $\ell_i=f_i+\eta$, where $f_i$ are elements of the
  vector $f$ from the proof of Lemma~\ref{l:matice}.  Since ${M}_{\varphi^2}$ has only one
  eigenvalue which is in modulus smaller than 1 (namely $\lambda^2$), the vector $f$ is an
  eigenvector to $\lambda^2$. Therefore
  \[
  f = \begin{pmatrix} 1 \\ \lambda^2 - \lambda \\ \lambda \end{pmatrix} \sim
  \begin{pmatrix} 1 \\ 0.349 \\ -0.274 \end{pmatrix},
  \]
  and we see that taking any $\eta>|\lambda|$ we get the sought geometric representation.
  Such representation is non-trivial, and, moreover, the lengths associated to distinct letters are distinct.
  As the spectrum of the matrix $M_\varphi$ contains two eigenvalues in modulus $>1$, the fixed point
  $\bu$ is not balanced, see Theorem~\ref{t:Adam}.
\end{example}

The following example illustrates that the assumption of the existence of an eigenvalue in modulus smaller
than one cannot be omitted.

\begin{example}
  Let $\varphi:\{A,B\}^*\to\{A,B\}^*$ be the substitution given by $A\mapsto ABBA$ and $B\mapsto AA$.
  It incidence matrix $M_{\varphi}=\left(\begin{smallmatrix}2 & 2 \\ 2 & 0\end{smallmatrix}\right)$ has
  eigenvalues $\lambda_1\sim3.23607$ and $\lambda_2\sim-1.23607$. Let $\bu$ be a fixed point of $\varphi$
  generated starting from $A|A$.

  We consider a subsequence of $(\Parikh[n])_{n\geq 0}$, namely
  $\big(\Parikh(\varphi^n(A))\big)_{n\geq 0}$. Using the incidence matrix of $\varphi$ we can write
  \[
  \Parikh\big(\varphi^n(A)\big) = M_{\varphi}^n\Parikh(A) =
	2^n\begin{pmatrix}1 & 1 \\ 1 & 0 \end{pmatrix}^{\!n}\!\!
	\begin{pmatrix}1\\0\end{pmatrix} = 2^n \begin{pmatrix} \mathrm{F}_{n+1} \\ \mathrm{F}_n \end{pmatrix},
  \]
  where $(F_n)_{n\geq 0}$ is the Fibonacci sequence.

  Should the statement of Lemma~\ref{l:matice} hold for $\varphi$ there would be a vector
  $f=\left(\begin{smallmatrix}f_1\\f_2\end{smallmatrix}\right)$ with $f_1\neq f_2$ and a constant
  $K>0$ such that
  \[
  \big|2^n(f_1\mathrm{F}_{n+1}+f_2\mathrm{F}_n)\big| =
  \big|f^{\mathrm{T}}\Parikh\big(\varphi^n(A)\big)\big| \leq K
  \qquad \text{for all $n\in\N$}.
  \]
  Using the Binet formula for the $n$-th term of the Fibonacci sequence
  \[
  \mathrm{F}_n = \frac{1}{\sqrt{5}}\big(\tau^{n+1}-(\tau')^{n+1}\big),
  \]
  where $\tau$ is the golden mean and $\tau'$ is its conjugate, we have
  \begin{align*}
  \big|2^n(f_1\mathrm{F}_{n+1}+f_2\mathrm{F}_n)\big| &=
	\left|2^n\left(\frac{f_1}{\sqrt{5}}\big(\tau^{n+2}-(\tau')^{n+2}\big)\right)+
	\left(\frac{f_2}{\sqrt{5}}\big(\tau^{n+1}-(\tau')^{n+1}\big)\right)\right| \\
	&= \frac{1}{2\sqrt{5}}\bigg|(2\tau)^{n+1}\bigg(f_1\tau+f_2-
	\underbrace{(f_1\tau'+f_2)\Big(\frac{\tau'}{\tau}\Big)^{n+1}}_{(\star)}\bigg)\bigg|.
  \end{align*}
  Since the term $(\star)$ goes to 0 as $n$ goes to infinity, the only case when
  $\big|f^{\mathrm{T}}\Parikh\big(\varphi^n(A)\big)\big|$ can be bounded is the case where
  $f_2=-f_1\tau$. In such a case
  \begin{align*}
  \big|2^n(f_1\mathrm{F}_{n+1}+f_2\mathrm{F}_n)\big| &=
  \frac{1}{2\sqrt{5}}\bigg|(2\tau)^{n+1}\bigg(\underbrace{f_1\tau-f_1\tau}_{=\:0}
	-(f_1\tau'-f_1\tau)\Big(\frac{\tau'}{\tau}\Big)^{n+1}\bigg)\bigg| \\
	&= \frac{1}{2\sqrt{5}}\bigg|(2\tau)^{n+1}\bigg(f_1(\underbrace{\tau-\tau'}_{=\:\sqrt{5}})
	\Big(\frac{\tau'}{\tau}\Big)^{n+1}\bigg)\bigg| = \frac{1}{2}\Big|f_1(2\tau')^{n+1}\Big|.
  \end{align*}
  The last expression tends to infinity (recall that $2\tau'\sim-1.236$ and $f_1$ is non-zero,
  otherwise $f_1=f_2$), thus $\big|f^{\mathrm{T}}\Parikh\big(\varphi^n(A)\big)\big|$ is not bounded.
\end{example}




\section{The BDL property of cut-and-Project sequences}\label{sec:BDLcap}


The object in focus, spectrum of quadratic Pisot units, was identified in~\cite{MaPaPe} with the
cut-and-project sequences, one-dimensional Delone sets obtained by projection of a section of the
two-dimensional integer lattice $\Z^2$ to an irrationally oriented straight line. The construction
is illustrated in Figure~\ref{fig:capscheme}.

\begin{figure}[!htp]
  \centering
  \begin{tikzpicture}[x=1cm,y=1cm]
    \draw [step=1.25cm, gray] (-0.25,-0.25) grid (10.25,6.5);
    \draw [name path = hlavni, thick] (-1.5,-0.25) -- (23.5:12)
      node [below right, xshift = -5] {$y=\varepsilon x$};
    \draw [yshift = 55] (-1.5,-0.25) -- (23.5:12);
    \draw (1.45,-0.25) -- ($(1.45,-0.25)+(113.5:7)$)
      node [below right, xshift = 7, fill=white] {$y=\eta x$};

    \foreach \bod [count=\i] in
	     {{0,1.25},{1.25,1.25},{1.25,2.5},{2.5,2.5},{3.75,2.5},{5,2.5},{5,3.75},
	     {6.25,3.75},{7.5,3.75},{7.5,5},{8.75,5},{10,5},{3.75,3.75},{10,6.25}} {
      \fill [black] (\bod) circle (1.5pt);
      \path [ name path = perp-\i ] (\bod) -- ($(\bod)+(-66.5:1.9)$);
      \draw [name intersections={of=hlavni and perp-\i, by=inter-\i}] (\bod) -- (inter-\i);
      \draw [very thick] (inter-\i) -- ($(inter-\i)+(-66.5:0.1)$);
    }
  \end{tikzpicture}

  \caption{The cut-and-project scheme.}\label{fig:capscheme}
\end{figure}
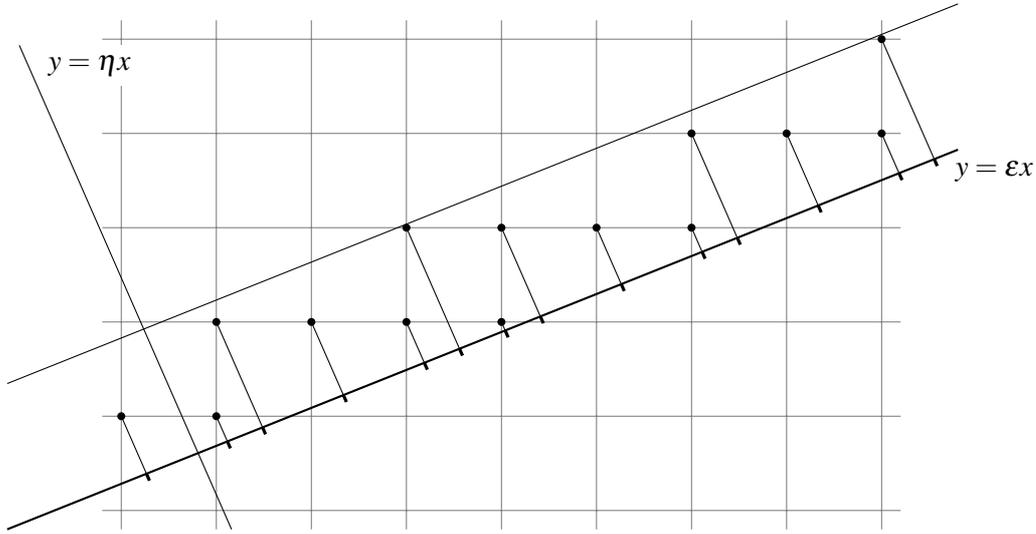

It can be shown that such a construction leads to the following definition.

\begin{definition}
  Let $\varepsilon,\eta$ be irrational, $\varepsilon\neq\eta$, and let $\Omega$ be a bounded
  interval. Let $\star:\Z+\Z\eta\to\Z+\Z\varepsilon$ be the isomorphism between additive groups
  $\Z+\Z\eta$ and $\Z+\Z\varepsilon$ given by $(a+b\eta)^{\star}=a+b\varepsilon$. The set
  \[
  \Sigma_{\varepsilon,\eta}(\Omega) = \big\{x\in\Z+\Z\eta:x^{\star}\in\Omega\big\}
  \]
  is called a cut-and-project set with acceptance window $\Omega$.
\end{definition}

The symmetries of the integer lattice $\Z^2$ give rise to certain symmetries of the
cut-and-project sets.
As $\Z^2$ is preserved under the multiplication by a unimodular integer matrix
$\left(\!\begin{smallmatrix}A & B \\[1pt] C & D \end{smallmatrix}\!\right)$, we have
\begin{equation}\label{eq:transformaceCAP}
  \Sigma_{\varepsilon,\eta}(\Omega) = (A+C\eta)
  \Sigma_{\tfrac{B+D\varepsilon}{A+C\varepsilon},
  \tfrac{B+D\eta}{A+C\eta}}\left(\frac1{A+C\varepsilon}\Omega\right)\,.
\end{equation}
Because of the translation symmetry of the integer lattice $\Z^2$, we have
\begin{equation}\label{eq:translaceCAP}
  \Sigma_{\varepsilon,\eta}(\Omega) + x =  \Sigma_{\varepsilon,\eta}(\Omega+x^\star)
  \qquad\text{ for every \ } x\in\Z+\Z\eta.
\end{equation}
As $\eta$ and $\varepsilon$ are irrational, the sets $\Z+\Z\eta$ and $\Z+\Z\varepsilon$ are dense in
$\mathbb{R}$.

With this is mind, we see that without loss of generality, we can assume that the acceptance
interval $\Omega$ is of the form $\Omega=[c,d)$ for some $0\leq c< d <1$. Note that the
presence/absence of the boundary points in the acceptance interval may cause presence/absence of
at most two points in the cut-and-project sets. The semi-open interval is chosen for convenience.

The lattice bounded distance property of cut-and-project sets has been studied by different
authors. In one dimension, results can be found for example in~\cite{Fret1}.  We rephrase the result
in this section in our formalism and put it into context of infinite words.

The property of one-dimensional cut-and-project sets to be or not bounded distance equivalent to a
lattice depends on the length of the acceptance interval. The following is a simple consequence of a
result of Kesten~\cite{Kesten}.

\begin{proposition}\label{coro:Kesten}
  Let $\varepsilon,\eta$ be irrational such that $\varepsilon\in(0,1)$, $\varepsilon\neq\eta$, and
  let $0\leq c<d\leq 1$. Then the set
  \[
  \Sigma_{\varepsilon,\eta}\big([c,d)\big) = \big\{a+b\eta : a,b\in\Z,\ c\leq a+b\varepsilon < d \big\}
  \]
  is BDL if and only if $d-c\in\Z+\Z\varepsilon$. If it is the case, then it is BDL to the lattice
  $\frac{|\eta-\varepsilon|}{d-c}\Z$.
\end{proposition}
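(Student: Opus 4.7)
The plan is to reduce the BDL question to a statement about the discrepancy of the sequence $(\{b\varepsilon\})_{b\in\Z}$ with respect to the interval $[c,d)$, which is exactly the object governed by Kesten's theorem, and then read off the lattice from the asymptotic density.

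First, I would rewrite $\Sigma_{\varepsilon,\eta}([c,d))$ in a convenient parametric form. Since $\varepsilon\in(0,1)$ is irrational and $[c,d)\subset[0,1)$, for every $b\in\Z$ there is at most one integer $a$ with $a+b\varepsilon\in[0,1)$, namely $a=-\floor{b\varepsilon}$, and this integer satisfies $a+b\varepsilon=\{b\varepsilon\}$ (the fractional part). Consequently
\[
\Sigma_{\varepsilon,\eta}\bigl([c,d)\bigr) = \bigl\{\,b(\eta-\varepsilon)+\{b\varepsilon\} : b\in\Z,\ \{b\varepsilon\}\in[c,d)\,\bigr\},
\]
so up to a bounded perturbation $\{b\varepsilon\}\in[0,1)$, the points of $\Sigma$ sit at $b(\eta-\varepsilon)$.

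Next, I would feed this description into Lemma~\ref{l:1dBDL}. Set $\xi:=|\eta-\varepsilon|/(d-c)$. By Weyl equidistribution the asymptotic density of $\Sigma\cap[0,N)$ as $N\to\infty$ is $\xi^{-1}=(d-c)/|\eta-\varepsilon|$, so if $\Sigma$ is BDL to any lattice it must be BDL to $\xi\Z$. By Lemma~\ref{l:1dBDL}, BDL is equivalent to $|\#(\Sigma\cap[0,N))-\xi^{-1}N|$ being bounded together with its analogue on $[-N,0)$. Assuming without loss of generality (by reflection) that $\eta-\varepsilon>0$, the fact that $\{b\varepsilon\}$ is trapped in $[0,1)$ gives
\[
\#\bigl(\Sigma\cap[0,N)\bigr) = \#\bigl\{\,0\leq b< N/(\eta-\varepsilon) : \{b\varepsilon\}\in[c,d)\,\bigr\} + O(1).
\]
After substituting $M=N/(\eta-\varepsilon)$ and using $(d-c)M=\xi^{-1}N$, the required bound becomes the uniform boundedness in $M\in\N$ of the one-sided discrepancy
\[
\bigl|\#\{0\leq b<M : \{b\varepsilon\}\in[c,d)\} - (d-c)M\bigr|.
\]

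Finally, I would invoke the theorem of Kesten~\cite{Kesten}, which asserts precisely that the discrepancy of the rotation by $\varepsilon$ against an interval of length $d-c$ is bounded if and only if $d-c\in\Z+\Z\varepsilon$. Combined with the reduction above this proves both implications of the proposition, and in the BDL case identifies the lattice as $\xi\Z=\tfrac{|\eta-\varepsilon|}{d-c}\Z$. The analogous bound on $[-N,0)$ follows by applying the same argument to the negative half of the $b$-axis, using $\{-b\varepsilon\}=1-\{b\varepsilon\}$ to convert it to a Kesten statement on an interval of the same length $d-c$. The main obstacle in my view is the careful bookkeeping in the identification $\#(\Sigma\cap[0,N))\leftrightarrow$ discrepancy function: tracking the $O(1)$ errors, handling boundary cases ($b=0$ when $c=0$, the sign of $\eta-\varepsilon$), and ensuring both directions of Kesten's equivalence propagate cleanly through Lemma~\ref{l:1dBDL} so that the forced density from Weyl pins down the lattice uniquely.
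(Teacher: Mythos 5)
Your proposal is correct and follows essentially the same route as the paper: rewrite $\Sigma_{\varepsilon,\eta}([c,d))$ as $\{n(\eta-\varepsilon)+\{n\varepsilon\} : \{n\varepsilon\}\in[c,d)\}$, translate the BDL property via Lemma~\ref{l:1dBDL} into boundedness of the counting discrepancy on $[0,N)$ and $[-N,0)$, and invoke Kesten's theorem. Your explicit appeal to equidistribution to pin down the unique candidate lattice $\xi\Z$ is a point the paper leaves implicit, but the argument is the same.
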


Before stating the proof, we cite the following characterization proved in~\cite{Kesten}.

\begin{theorem}[\cite{Kesten}]\label{t:Kesten}
  Let $\varepsilon\in(0,1)$, $0\leq c<d\leq 1$. Then
  \[
  \#\big\{1\leq k\leq N : k\in\Z,\ c\leq \{k\varepsilon\}<d \big\} - N(d-c)
  \]
  is bounded in $N$ if and only if $d-c\in\Z+\Z\varepsilon$.
\end{theorem}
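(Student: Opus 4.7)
The plan is to reduce the BDL question, via the counting criterion of Lemma~\ref{l:1dBDL}, to a discrepancy question about the sequence $(\{b\varepsilon\})_{b\in\Z}$, whereupon Kesten's theorem (Theorem~\ref{t:Kesten}) supplies exactly the desired equivalence.

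First, I parametrize $\Sigma_{\varepsilon,\eta}([c,d))$ by $b\in\Z$. Since $[c,d)\subset[0,1)$ and $\varepsilon$ is irrational, for each $b$ there is precisely one $a\in\Z$ with $a+b\varepsilon\in[0,1)$, namely $a=-\lfloor b\varepsilon\rfloor$, and this $a$ places $a+b\varepsilon$ in $[c,d)$ precisely when $\{b\varepsilon\}\in[c,d)$. The corresponding point on the line is
\[
x_b := -\lfloor b\varepsilon\rfloor + b\eta = b(\eta-\varepsilon) + \{b\varepsilon\},
\]
so $\Sigma_{\varepsilon,\eta}([c,d)) = \{x_b : b\in\Z,\ \{b\varepsilon\}\in[c,d)\}$.

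Assume without loss of generality that $\rho := \eta-\varepsilon > 0$; the opposite sign is handled by the fully analogous argument with $b\in(-N/|\rho|,0]$ in place of $[0,N/\rho)$. Since $\{b\varepsilon\}\in[0,1)$, the condition $x_b\in[0,N)$ differs from $b\in[0,N/\rho)$ by at most $\lceil 1/\rho\rceil+1$ integers $b$, uniformly in $N$. Setting $M_N := \lfloor N/\rho\rfloor$, this gives
\[
\#\bigl(\Sigma_{\varepsilon,\eta}([c,d))\cap[0,N)\bigr) = \#\bigl\{1\le b\le M_N : \{b\varepsilon\}\in[c,d)\bigr\} + O(1).
\]
For the half-line $[-N,0)$, I would use the identity $\{-b\varepsilon\} = 1-\{b\varepsilon\}$ (valid for $b\varepsilon\notin\Z$, which holds for all $b\ne 0$) to convert the acceptance window $[c,d)$ on $b<0$ into the window $(1-d,1-c]\subset[0,1)$ on $-b>0$. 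This window has the same length $d-c$ and the condition $d-c\in\Z+\Z\varepsilon$ is unchanged, so Theorem~\ref{t:Kesten} applies identically on the negative side (possibly modulo one point from the endpoint convention, which is $O(1)$).

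Theorem~\ref{t:Kesten} now states that the discrepancy $\#\{1\le b\le M : \{b\varepsilon\}\in[c,d)\} - M(d-c)$ is bounded in $M$ iff $d-c\in\Z+\Z\varepsilon$. Combined with $M_N(d-c) = (d-c)N/\rho + O(1)$ and the analogous computation on the negative half-line, this yields that both
\[
\Bigl|\#\bigl(\Sigma_{\varepsilon,\eta}([c,d))\cap[0,N)\bigr) - \frac{d-c}{|\eta-\varepsilon|}N\Bigr| \quad\text{and its counterpart on } [-N,0)
\]
are bounded in $N$ precisely when $d-c\in\Z+\Z\varepsilon$. Lemma~\ref{l:1dBDL} applied with $\xi = |\eta-\varepsilon|/(d-c)$ then gives both directions and identifies the target lattice as $\frac{|\eta-\varepsilon|}{d-c}\Z$. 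The main obstacle is bookkeeping: verifying that the $O(1)$ boundary errors in the $b$-count are genuinely uniform in $N$, and carefully handling the half-open subtlety when reflecting to negative $b$ (so that the converted window stays inside $[0,1)$ as required by the hypothesis of Theorem~\ref{t:Kesten}). Once these routine checks are in place, the chain of equivalences is immediate.
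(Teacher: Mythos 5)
Your proposal does not prove the statement in question. The statement is Kesten's theorem itself: the discrepancy $\#\{1\leq k\leq N : c\leq\{k\varepsilon\}<d\}-N(d-c)$ is bounded in $N$ if and only if $d-c\in\Z+\Z\varepsilon$. What you have written is an argument that \emph{invokes} Theorem~\ref{t:Kesten} as a black box in order to deduce the BDL characterization of the cut-and-project set $\Sigma_{\varepsilon,\eta}([c,d))$ --- that is, you have reconstructed the paper's proof of Proposition~\ref{coro:Kesten} (and done so quite faithfully: the parametrization $x_b=b(\eta-\varepsilon)+\{b\varepsilon\}$, the $O(1)$ conversion between $x_b\in[0,N)$ and $b\in[0,N/\rho)$, and the appeal to Lemma~\ref{l:1dBDL} are exactly the steps in the paper). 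As a proof of Theorem~\ref{t:Kesten} it is circular, since the key equivalence you need is precisely the theorem you are supposed to establish.

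Note that the paper itself offers no proof of Theorem~\ref{t:Kesten}; it is quoted from Kesten's work, so there is nothing internal to compare against. A genuine proof would be a substantial piece of discrepancy theory: the ``if'' direction (intervals of length in $\Z+\Z\varepsilon$ are bounded remainder sets) can be obtained by exhibiting the function $\chi_{[c,d)}-(d-c)$ as a bounded coboundary for the rotation by $\varepsilon$, e.g.\ via continued fractions or the Ostrowski expansion; the ``only if'' direction is the hard half, due originally to Kesten and later reproved by ergodic-theoretic skew-product arguments. None of these ingredients appears in your write-up, so the proposal should be regarded as a proof of Proposition~\ref{coro:Kesten} rather than of the stated theorem.
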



\begin{proof}[Proof of Proposition~\ref{coro:Kesten}]
  Consider first the set
  \[
  \Sigma_{\varepsilon,\eta}\big([0,1)\big)=\{ a+b\eta : a,b\in\Z,\ a+b\varepsilon \in[0,1) \}.
  \]
  Since $a+b\varepsilon \in[0,1)$ if and only if $b\varepsilon -1 < -a \leq b\varepsilon$, or
  equivalently $-a = \lfloor b\varepsilon\rfloor$, we can write
  \begin{equation}\label{eq:jednotkoveokno}
    \Sigma_{\varepsilon,\eta}\big([0,1)\big)=\{ b\eta -\floor{b\varepsilon}  : b\in\Z \}.
  \end{equation}
  Therefore
  \begin{align*}
    \Sigma_{\varepsilon,\eta}\big([c,d)\big) & =
      \{ n\eta - \floor{n\varepsilon} : n\in\Z,\ n\varepsilon - \floor{n\varepsilon} \in[c,d) \} =\\
    & =\{ n(\eta-\varepsilon) + \{n\varepsilon\} : n\in\Z,\ c\leq \{n\varepsilon\} <d \}.
  \end{align*}
  In order to decide about the BDL property of the set $\Sigma_{\varepsilon,\eta}\big([c,d)\big)$,
  we will use Lemma~\ref{l:1dBDL}.  Let us to determine the number of elements of
  $\Sigma_{\varepsilon,\eta}\big([c,d)\big)$ in the intervals $[0,N)$ and $[-N,0)$.  Assume first
  that $\eta>\varepsilon$. Then we have
  \[
  0 \leq n(\eta-\varepsilon) + \{n\varepsilon\} < N
  \]
  if and only if
  \[
  -\{n\varepsilon\}(\eta-\varepsilon)^{-1} \leq n <
    N(\eta-\varepsilon)^{-1} -\{n\varepsilon\})(\eta-\varepsilon)^{-1}.
  \]
  The number of such integers $n$ is equal to $\floor{N(\eta -\varepsilon)^{-1}}$ or
  $\left\lceil N(\eta -\varepsilon)^{-1}\right\rceil$.  The number of points in
  $\Sigma_{\varepsilon,\eta}\big([c,d)\big)$ in the interval $[0,N)$ is therefore equal to
  \[
  \#\big\{1\leq n\leq N(\eta -\varepsilon)^{-1} : n\in\Z,\ c\leq \{n\varepsilon\}<d \big\}
  \]
  up to a possible difference of a constant independent of $N$.
  Consequently, by Theorem~\ref{t:Kesten},  the sequence
  \[
  \#\big\{1\leq n\leq N(\eta -\varepsilon)^{-1} : n\in\Z,\ c\leq
    \{n\varepsilon\}<d \big\} - (d-c)(\eta -\varepsilon)^{-1}N
  \]
  is bounded in $N$ if and only if $d-c\in\Z[\varepsilon]$.  Similarly, one proceeds for the
  interval $[-N,0)$.  Lemma~\ref{l:1dBDL} implies that
  \[
  \#\Big(\Sigma_{\varepsilon,\eta}\big([c,d)\big) \cap [0,N)\Big) - (d-c)N(\eta -\varepsilon)^{-1}
  \]
  is bounded in $N$ if and only if $d-c\in\Z[\varepsilon]$. If it is the case, then
  $\Sigma_{\varepsilon,\eta}\big([c,d)\big)$ is bounded distance to the lattice $\xi \mathbb{Z}$,
  where $\xi = (d-c)^{-1}(\eta -\varepsilon)$.

  If $\eta<\varepsilon$, the proof is analogous.
\end{proof}

For any choice of parameters $\varepsilon,\eta,\Omega$, the distances between consecutive points in
the set $\Sigma_{\varepsilon,\eta}(\Omega)$ take only two or three values,
see~\cite{GuMaPeBordeaux}.  Then, one can define an infinite word $\bu_{\varepsilon,\eta}(\Omega)$
over a binary or a ternary alphabet which codes the ordering of the distances in the cut-and-project
sequence.  Given parameters $\varepsilon,\eta$, the values of the distances do not depend on the
position of the acceptance interval $\Omega$ but only on its length $|\Omega|$. Only for a discrete
set of values of $|\Omega|$, the cut-and-project sequence $\Sigma_{\varepsilon,\eta}(\Omega)$ gives
a binary infinite word $\bu_{\varepsilon,\eta}(\Omega)$. In that case, the infinite word is a
sturmian word, i.e., an aperiodic binary $1$-balanced word. On the other hand, any sturmian word can
be obtained as $\bu_{\varepsilon,\eta}(\Omega)$, which can be seen for example through the
equivalent definition of sturmian words by mechanical words. For the equivalence between these
definitions, see~\cite{lothaire,lunpl}.

\begin{example}\label{ExampleContinue}
  Take $\varepsilon=\frac12(3-\sqrt5)=\tau^{-2}\sim0.312$, $\eta=\frac12(3+\sqrt5)=\tau^2\sim2.618$
  and $\Omega=[0,1)$, then we obtain from~\eqref{eq:jednotkoveokno} that $\Sigma_{\varepsilon,\eta}(\Omega)$ has
  distances between consecutive points in the form
  \[
  (b+1)\tau^2 +\floor{(b+1)/\tau^2} - b\tau^2 - \floor{b/\tau^2} \in\{\tau^2,\tau^2-1=\tau\}.
  \]
  First few elements of $\Sigma_{\varepsilon,\eta}(\Omega)$ around zero are
  \[
  \dots, - \tau^2-\tau, - \tau, 0 , \tau^2, 2\tau^2, 2\tau^2+\tau, \dots.
  \]
  In Example~\ref{ex:examplecontinue2} we will show that $\Sigma_{\varepsilon,\eta}(\Omega)$ is a
  geometric representation of the infinite word from Example~\ref{ex:selfsimgeomrep}.
\end{example}

For a generic length of the interval $\Omega$, the infinite word $\bu_{\varepsilon,\eta}(\Omega)$
coincides with a coding of exchange of three intervals with permutation $(321)$,
see~\cite{GuMaPeBordeaux}.

\begin{corollary}\label{coro:ekvivalenceCAP}
  Let $\varepsilon,\eta$ be irrational, $\varepsilon\neq\eta$, and let\/ $\Omega$ be a bounded
  interval.  The following statements are equivalent:
  \begin{enumerate}[(a)]
  \item
    The cut-and-project sequence
    $\Sigma_{\varepsilon,\eta}(\Omega)$ is bounded distance to a lattice;
  \item
    the infinite word $\bu_{\varepsilon,\eta}(\Omega)$ is balanced;
  \item
    $|\Omega|\in\Z+\Z\varepsilon$.
  \end{enumerate}
  Moreover, if it is the case, then $\Sigma_{\varepsilon,\eta}(\Omega)$ is bounded distance to the
  lattice $\frac{|\eta-\varepsilon|}{|\Omega|}\Z$.
\end{corollary}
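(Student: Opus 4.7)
The plan is to close the cycle (b)$\Rightarrow$(a)$\Rightarrow$(c)$\Rightarrow$(b). Before doing the three implications, I would reduce to the normalized setting of Proposition~\ref{coro:Kesten}, namely $\varepsilon\in(0,1)$ and $\Omega=[c,d)\subseteq[0,1)$, by applying a suitable unimodular matrix in~\eqref{eq:transformaceCAP} and a translation from~\eqref{eq:translaceCAP}. All three properties (a), (b), (c) are invariant under this reduction: (a) is preserved by the scaling and translation rules at the start of Section~\ref{sec:BDL1d}; (b) is preserved because $\bu_{\varepsilon,\eta}(\Omega)$ records only the combinatorial ordering of consecutive gaps, which an affine reparametrization cannot alter; and (c) is preserved because a unimodular matrix sends $\Z+\Z\varepsilon$ onto itself, so $\Z+\Z\varepsilon'=(A+C\varepsilon)^{-1}(\Z+\Z\varepsilon)$ while $|\Omega|$ rescales by the same factor $(A+C\varepsilon)^{-1}$.

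For (b)$\Rightarrow$(a), the key observation is that $\Sigma_{\varepsilon,\eta}(\Omega)$ is itself a geometric representation, in the sense of Definition~\ref{d:geomrep}, of the coding word $\bu_{\varepsilon,\eta}(\Omega)$, with letter lengths equal to the two or three possible consecutive distances. Proposition~\ref{p:anygeomBDL} then immediately yields the BDL property. The implication (a)$\Rightarrow$(c) is a direct application of Proposition~\ref{coro:Kesten} in the normalized setup.

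For (c)$\Rightarrow$(b), I would first note that $(\Z+\Z\varepsilon)\cap(0,1]$ consists precisely of $\{1\}\cup\{\{b\varepsilon\}:b\in\Z,\,b\neq0\}$: writing $|\Omega|=a+b\varepsilon$, irrationality of $\varepsilon$ forces $a+\lfloor b\varepsilon\rfloor=0$ whenever $b\neq0$ and $|\Omega|\in(0,1)$. These are exactly the window lengths for which the first-return map of the rotation $x\mapsto x+\varepsilon\pmod 1$ to $\Omega$ is an exchange of only two subintervals, so $\Sigma_{\varepsilon,\eta}(\Omega)$ has just two distinct consecutive distances and $\bu_{\varepsilon,\eta}(\Omega)$ is a Sturmian word, hence $1$-balanced. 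The explicit lattice $\tfrac{|\eta-\varepsilon|}{|\Omega|}\Z$ in the ``moreover'' clause is read off from Proposition~\ref{coro:Kesten} in the normalized case and transported back by tracking the scaling factor $A+C\eta$. I expect the main obstacle to be the normalization bookkeeping, in particular verifying that $|\Omega|$ and the module $\Z+\Z\varepsilon$ rescale by precisely the same factor so that both condition~(c) and the BDL lattice transport consistently.
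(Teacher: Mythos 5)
Your overall architecture matches the paper's: the normalization to $\varepsilon\in(0,1)$, $\Omega\subset[0,1)$ via~\eqref{eq:transformaceCAP} and~\eqref{eq:translaceCAP}, the implication (b)$\Rightarrow$(a) by viewing $\Sigma_{\varepsilon,\eta}(\Omega)$ as a geometric representation of $\bu_{\varepsilon,\eta}(\Omega)$ and invoking Proposition~\ref{p:anygeomBDL}, the equivalence (a)$\Leftrightarrow$(c) from Proposition~\ref{coro:Kesten}, and the transport of the lattice $\frac{|\eta-\varepsilon|}{|\Omega|}\Z$ back through the unimodular transformation are all as in the paper. The genuine gap is in your step (c)$\Rightarrow$(b). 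It is not true that $|\Omega|\in\Z+\Z\varepsilon$ forces the first-return map of the rotation to $\Omega$ to be an exchange of two intervals, nor that $\bu_{\varepsilon,\eta}(\Omega)$ is then binary (let alone Sturmian). Already on density grounds this cannot work: $(\Z+\Z\varepsilon)\cap(0,1]$ is dense, whereas --- as the paper notes just before the corollary --- only a \emph{discrete} set of window lengths produces a binary coding word. Concretely, take $\varepsilon=(3-\sqrt5)/2\approx 0.382$ and $|\Omega|=2-4\varepsilon=\{-4\varepsilon\}\approx0.472$: the visits of $n\varepsilon\bmod 1$ to $[0,|\Omega|)$ occur at $n=1,3,6,8,9,11,14,16,19,21,22,\dots$, so the return times take the three values $1,2,3$ and the coding word is ternary. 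Your identification of $(\Z+\Z\varepsilon)\cap(0,1]$ with $\{1\}\cup\{\{b\varepsilon\}:b\in\Z,\ b\neq0\}$ is correct, but being of the form $\{b\varepsilon\}$ does not make the induced map a rotation; that happens only for special $b$ (essentially denominators of convergents).

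What the paper does instead for this implication (phrased there as (a)$\Rightarrow$(b), but passing through (c) anyway): it invokes Cassaigne's result that for $|\Omega|\in\Z+\Z\varepsilon$ the three-interval-exchange word $\bu_{\varepsilon,\eta}(\Omega)$ is a \emph{morphic image} of a Sturmian word, and then applies Proposition~\ref{p:morphicimagebalance} (balancedness is preserved by non-erasing morphisms) to conclude that the --- generally ternary --- word is $C$-balanced for some $C$ that need not equal $1$. Your argument needs this (or an equivalent) external input; the two-distance shortcut does not close the cycle (c)$\Rightarrow$(b).
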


\begin{proof}
  First realize that without loss of generality we can assume $\varepsilon\in(0,1)$ and
  $\Omega\subset [0,1)$. Otherwise, applying~\eqref{eq:transformaceCAP} and \eqref{eq:translaceCAP}
  to the cut-and-project set $\Sigma_{\varepsilon,\eta}(\Omega)$ we find a cut and project set
  $\Sigma_{\tilde{\varepsilon},\tilde{\eta}}(\widetilde{\Omega})$ which is an affine image of
  $\Sigma_{\varepsilon,\eta}(\Omega)$ and its parameters already satisfy
  $\tilde{\varepsilon}\in (0,1)$ and $ \widetilde{\Omega} \subset [0,1)$.  Let us stress that the
  transformation~\eqref{eq:transformaceCAP} satisfies: $|\Omega|\in\Z+\Z\varepsilon$ if and
  only if $|\widetilde{\Omega}|\in\Z+\Z\tilde{\varepsilon}$.

  Equivalence (a) $\Leftrightarrow$ (c) has been established as Proposition~\ref{coro:Kesten}.

  For the proof of (a) $\Rightarrow$ (b) assume that $\Sigma_{\varepsilon,\eta}(\Omega)$ is bounded distance
  equivalent to a lattice. By Proposition~\ref{coro:Kesten} this means that
  $|\Omega|\in\Z+\Z\varepsilon$. Result of Cassaigne~\cite{Cassaigne} implies that the
  infinite word $\bu_{\varepsilon,\eta}(\Omega)$ is
  a morphic image of a sturmian word. Since sturmian words
  are $1$-balanced, Proposition~\ref{p:morphicimagebalance} gives that
  $\bu_{\varepsilon,\eta}(\Omega)$ itself is balanced.

  In order to show (b) $\Rightarrow$ (a) we realize that $\Sigma_{\varepsilon,\eta}(\Omega)$ is a
  geometric representation of $\bu_{\varepsilon,\eta}(\Omega)$. It suffices now to use
  Proposition~\ref{p:anygeomBDL}.

  In order to derive to which lattice the cut-and-project sequence is BDL, we again use
  Proposition~\ref{coro:Kesten} which states that if $\varepsilon\in(0,1)$ and $|\Omega|<1$, then
  $\Sigma_{\varepsilon,\eta}(\Omega)$ is bounded distance equivalent to the lattice
  $\frac{|\eta-\varepsilon|}{|\Omega|}\Z$. If we need to use
  transformation~\eqref{eq:transformaceCAP}, we derive that
  $\Sigma_{\tilde{\varepsilon},\tilde{\eta}}(\widetilde{\Omega})$ is BDL to $\xi\Z$ where
  \[
  \xi=\left|\frac{\tilde{\eta}-\tilde{\varepsilon}}{|\widetilde{\Omega}|}\right|
  =\left|\frac{\frac{B+D\eta}{A+C\eta}-\frac{B+D\varepsilon}{C+D\varepsilon}}
    {\frac{1}{A+C\varepsilon}|\Omega|}\right|
  =\frac{|\eta-\varepsilon|}{|A+C\eta|\cdot|\Omega|}.
  \]
  Therefore $\Sigma_{\varepsilon,\eta}(\Omega) =
  (A+C\eta)\Sigma_{\tilde{\varepsilon},\tilde{\eta}}(\widetilde{\Omega})$ is BDL to the lattice
  $\frac{|\eta-\varepsilon|}{|\Omega|}\Z$.
\end{proof}

\section{The spectrum of quadratic  units  and  the BDL property }\label{sec:spectrum}



The spectra of algebraic numbers, as defined in~\eqref{eq:defspectrum}, are studied since their
introduction by Erd\H os et al.~\cite{Erdos} who considered the case $\alpha\in(1,2)$ and
$\D=\{0,1,\dots,M\}$, $M\in\N$. The interest of the spectra~\eqref{eq:defspectrum} stems in the
direct connection of their geometric properties to the arithmetic properties of the corresponding
numeration system. For example, relative density of the spectrum is equivalent to completeness of
the numeration system~\cite{Vavroch}, uniform discreteness has impact on the
effectivity of algorithms~\cite{FrPe}. In case of real base $\alpha$ and real alphabet, many of the
properties are known. If the base is $\alpha=\pm\beta$ where $\beta$ is a Pisot number and the
alphabet of digits is formed by sufficiently many consecutive integers, the spectrum is a Delone set
of finite local complexity~\cite{bugeaud} and the sequence of distances between neighboring
elements in the spectrum is coded by an infinite word which is substitutive~\cite{FengWen}.

Note that if the base $\alpha$ is positive and the alphabet of digits non-negative, then the
spectrum lies in $[0,+\infty)$, and, obviously, cannot be bounded distance equivalent to any
  lattice. It leads us to consider the generalized spectrum with either negative base, or the
  alphabet of digits containing $\pm1$.  It is also reasonable to require that the alphabet of
  digits contains $\#\D>|\alpha|$ elements, since otherwise the numeration system with base $\alpha$
  and digits $\D$ is not complete, and consequently, the spectrum is not relatively dense.

In this paper, we are interested in the spectrum as considered in~\cite{MaPaPe}: Let
$\D=\{m,\dots,M\}$, $m \leq 0 < M$, and let $\beta=|\alpha|$ be a quadratic Pisot unit, i.e.,
$\beta>1$ an algebraic number whose minimal polynomial is of the form
\[
  x^2 - px - 1,\ p \geq 1
  \qquad\text{or}\qquad
  x^2 -  px + 1,\ p \geq 3.
\]
We have $\lfloor\beta\rfloor=p$ and $\lceil\beta\rceil=p$ in the first and second case,
respectively.

In~\cite{MaPaPe} it was shown that, in this case, the spectrum $X^{\D}(\alpha)$ can be identified
with a cut-and-project set $\Sigma_{\varepsilon,\eta}(\Omega)$ with parameters $\eta=\beta$ and
$\varepsilon=\beta'$, where $\beta'$ is the conjugate of $\beta$.


\begin{proposition}[\cite{MaPaPe}]\label{t:MaPaPe}
  Let $\beta>1$ be a quadratic unit with conjugate $\beta'$. Let $\D\ni0$ be an alphabet of
  consecutive integers satisfying $\#\D>\beta$. If
  \[
  (\alpha=-\beta)\qquad\text{or}\qquad(\alpha=\beta\text{ and }\{-1,0,1\}\subset\D),
  \]
  then $X^{\D}(\alpha)=\Sigma_{\beta',\beta}(\Omega)$, where
  $\Omega=\mathcal{I}_{\frac{1}{\alpha'},\D}^{\circ}\cup\{0\}$.
\end{proposition}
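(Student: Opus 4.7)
The plan is to prove the two inclusions between $X^{\D}(\alpha)$ and $\Sigma_{\beta',\beta}(\Omega)$ via the Galois conjugation of $\Q(\beta)/\Q$. Since $\beta$ is a quadratic unit, $\Z[\beta]=\Z+\Z\beta$, and every $x=a+b\beta$ has a unique conjugate $x^{\star}=a+b\beta'$; this is precisely the $\star$ map used in the definition of the cut-and-project set. Both sets evidently lie in $\Z+\Z\beta$ — the spectrum because $\alpha=\pm\beta$ and $\D\subset\Z$, the cut-and-project set by construction — so only the images under $\star$ need to be matched with $\Omega$.

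For the inclusion $X^{\D}(\alpha)\subset\Sigma_{\beta',\beta}(\Omega)$, take $x=\sum_{i=0}^{n}a_i\alpha^i$ with $a_i\in\D$. Applying $\star$ gives $x^{\star}=\sum_{i=0}^{n}a_i(\alpha')^i$, where $\alpha'\in\{\beta',-\beta'\}$. Since $|\alpha'|=|\beta'|<1$ by the Pisot property, the finite sum can be extended to a convergent infinite series with digits in $\D$, placing $x^{\star}$ in the attractor $\mathcal{I}_{1/\alpha',\D}=\{\sum_{i\geq 0}c_i(\alpha')^i:c_i\in\D\}$ of the contractive IFS $\{z\mapsto\alpha' z+a : a\in\D\}$. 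The hypothesis $\#\D>\beta=1/|\alpha'|$ is the classical overlap condition forcing this attractor to be a genuine closed interval rather than a Cantor set, and one further verifies that $x^{\star}$ lies in its interior whenever $x\neq 0$; together with the isolated point $0$, this gives $x^{\star}\in\Omega$.

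The reverse inclusion $\Sigma_{\beta',\beta}(\Omega)\subset X^{\D}(\alpha)$ is the harder step. Given $y\in\Z+\Z\beta$ with $y^{\star}\in\Omega$, construct the $\alpha$-representation of $y$ by a greedy algorithm: having chosen $a_0,\ldots,a_{k-1}\in\D$, set $y_k:=\alpha^{-k}(y-\sum_{i<k}a_i\alpha^i)\in\Z+\Z\beta$ and pick $a_k\in\D$ so that $y_{k+1}^{\star}=(y_k^{\star}-a_k)/\alpha'\in\Omega$. Such a digit exists precisely because $\Omega$ is covered by the sets $\alpha'\Omega+a$, $a\in\D$ — once more the condition $\#\D>\beta$. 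Termination follows from the fact that $y_k^{\star}$ remains inside the bounded window $\Omega$ while $y_k\in\Z+\Z\beta$, so the integer $\beta$-coordinate of $y_k$, which is proportional to $(y_k-y_k^{\star})/(\beta-\beta')$, must eventually shrink to $0$.

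The principal obstacle will be this termination argument, together with the dichotomy $\alpha=-\beta$ versus $\alpha=\beta$. When $\alpha=-\beta$, the alternating signs of $(-\beta)^i$ afford the greedy step enough flexibility for any admissible $\D$; for $\alpha=\beta$, the partial remainder must be adjustable in both directions at each step, which is precisely why the stronger assumption $\{-1,0,1\}\subset\D$ is needed. Additional bookkeeping is required to pin down why $\Omega$ is described as the open interval $\mathcal{I}_{1/\alpha',\D}^{\circ}$ together with the isolated point $0$, rather than the full closure: the endpoints of $\mathcal{I}_{1/\alpha',\D}$ correspond to infinite $\alpha'$-expansions whose truncations are never attained by any $x\in X^{\D}(\alpha)$, while $0=\sum_{i=0}^{n}0\cdot\alpha^i$ must be added back in by hand.
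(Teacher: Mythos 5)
This proposition is imported from \cite{MaPaPe}; the paper you are working from states it without proof, so there is no in-paper argument to compare against. Judged on its own terms, your forward inclusion is essentially fine: conjugating a finite expansion, padding with zeros (using $0\in\D$), and landing in the IFS attractor $\mathcal{I}_{1/\alpha',\D}$ is the right move, and the claim that a nonzero finite expansion cannot hit an endpoint does follow from the uniqueness of the extremal digit string. The overlap condition $\#\D>\beta$ also does what you want, although even the covering of $\mathcal{I}^{\circ}$ by the \emph{interiors} of the pieces $\alpha'\mathcal{I}+a$ needs the quantitative estimate $|\mathcal{I}|>\beta$, which is true here but not automatic from $\#\D>\beta$ alone.

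The genuine gap is the termination of your digit algorithm in the reverse inclusion. From $y_{k+1}=(y_k-a_k)/\alpha$ with $|\alpha|=\beta>1$ and bounded digits you get that $|y_k|$ is eventually bounded, and $y_k^{\star}\in\Omega$ is bounded by hypothesis; since $y_k\in\Z+\Z\beta$, this only shows that the remainders eventually range over a \emph{finite} set, hence (for a deterministic digit choice) become eventually periodic. It does not show they reach $0$, and "the integer $\beta$-coordinate must eventually shrink to $0$" is an assertion, not an argument: a nontrivial cycle $y_{k+p}=y_k$ would satisfy $y_k=\bigl(\sum_{j=0}^{p-1}a_{k+j}\alpha^{j}\bigr)/(1-\alpha^{p})$ and must be excluded by hand. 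Already for $\alpha=-\tau$, $\D=\{0,1\}$, the candidate fixed point $y=2-\tau$ of a period-one cycle has $y^{\star}=\tau^{2}$, which sits exactly on the boundary of $\mathcal{I}$; ruling such cycles out is precisely where the seemingly cosmetic choice $\Omega=\mathcal{I}_{1/\alpha',\D}^{\circ}\cup\{0\}$ (open interval plus the isolated point $0$) does real work. In your sketch this is relegated to "additional bookkeeping", but it is the crux: without an argument that every cycle of the remainder map forces a boundary or zero value, the finiteness of the expansions — and hence $\Sigma_{\beta',\beta}(\Omega)\subset X^{\D}(\alpha)$ — is not established. The case split $\alpha=-\beta$ versus $\alpha=\beta$ with $\{-1,0,1\}\subset\D$ enters exactly here as well, and your proposal correctly anticipates this but does not supply the argument.
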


The set $\mathcal{I}_{\frac{1}{\alpha'},\D}$ in the previous proposition is, in general,
defined as the set of all real numbers having a representation in the numeration system with base
$\frac{1}{\alpha'}$ and alphabet $\D$. However, if the alphabet
$\D=\{m,\ldots,0,1,\ldots,M\}$ is sufficiently large, i.e., $M-m>|\frac{1}{\alpha'}|-1$,
then
\begin{equation}\label{eq:inteval_I_gamma}
\mathcal{I}_{\frac{1}{\alpha'},\D} =
\begin{cases}
  \Big[\frac{m}{1-\alpha'},\frac{M}{1-\alpha'}\Big] & \text{if }\frac{1}{\alpha'}>1, \\[3mm]
  \Big[(M+m\frac{1}{\alpha'})\frac{\alpha'}{1-\alpha'^2},
      (M\frac{1}{\alpha'}+m)\frac{\alpha'}{1-\alpha'^2}\Big]
    &
    \text{if }\frac{1}{\alpha'}<-1. \\
\end{cases}
\end{equation}

Using this connection between spectra and cut-and-project sets we can decide whether a spectrum
is BDL or not.

\begin{theorem}\label{t:spectra}
  Let $\beta>1$ be a quadratic Pisot unit with conjugate $\beta'$ and
  $\D$ be an alphabet of consecutive integers such that $0\in\D$ and $\#\D>\beta$.
  Let $\alpha=-\beta$, or $\alpha = \beta$ and $\{-1,0,1\}\subset\D$.

  Then $X^{\D}(\alpha)$ is BDL if and only if
  \begin{equation}\label{eq:delicipodminka}
    \begin{array}{ll}
      \text{either} & \lfloor\beta\rfloor\ \text{divides}\ (\#\D-1) \text{ and } \beta'<0, \\[3pt]
      \text{or} & (\lfloor\beta\rfloor-1)\ \text{divides}\ (\#\D-1) \text{ and } \beta'>0.
    \end{array}
  \end{equation}
\end{theorem}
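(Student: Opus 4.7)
The plan is to combine Proposition~\ref{t:MaPaPe} with the length criterion of Corollary~\ref{coro:ekvivalenceCAP}. The former identifies $X^\D(\alpha)$ with the cut-and-project set $\Sigma_{\beta',\beta}(\Omega)$ for $\Omega = \mathcal{I}_{1/\alpha',\D}^{\circ}\cup\{0\}$. Since BDL is invariant under adding or removing finitely many points, the latter reduces the whole question to deciding whether $|\mathcal{I}_{1/\alpha',\D}| \in \Z + \Z\beta'$.

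First I would compute $|\mathcal{I}_{1/\alpha',\D}|$ in each of the four sub-cases determined by the sign of $\beta'$ (which picks the minimal polynomial $x^2 - px - 1$ if $\beta' < 0$ and $x^2 - px + 1$ if $\beta' > 0$) and by the choice $\alpha = \pm\beta$ (which sets $\alpha' = \pm\beta'$ and hence flips the sign of $1/\alpha'$). Selecting the appropriate branch of~\eqref{eq:inteval_I_gamma} in each sub-case, a short calculation should give
\[
|\Omega| = \frac{\#\D - 1}{1 + \beta'}\quad\text{if }\alpha=-\beta,\ \beta'<0,
\qquad
|\Omega| = \frac{\#\D - 1}{1 - \beta'}\quad\text{if }\alpha=\beta,\ \beta'>0,
\]
while the two remaining mixed cases should both collapse to $|\Omega| = (\#\D-1)\,\beta/(\beta-1)$.

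Next I would rationalise each of these expressions inside the ring $\Z + \Z\beta'$ using the minimal polynomial of $\beta$. For $\beta' < 0$, the identities $\beta\beta' = -1$ and $\beta + \beta' = p = \lfloor\beta\rfloor$ should yield
\[
|\Omega| = \frac{\#\D - 1}{p}\bigl(p + 1 - \beta'\bigr),
\]
which lies in $\Z + \Z\beta'$ exactly when $\lfloor\beta\rfloor$ divides $\#\D - 1$. For $\beta' > 0$, analogous manipulations with $\beta\beta' = 1$ and $\beta + \beta' = p = \lfloor\beta\rfloor + 1$ should give
\[
|\Omega| = \frac{\#\D - 1}{p - 2}\bigl(p - 1 - \beta'\bigr),
\]
which is in $\Z + \Z\beta'$ exactly when $\lfloor\beta\rfloor - 1$ divides $\#\D - 1$. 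Stitching the two sign cases together then produces~\eqref{eq:delicipodminka}.

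The main obstacle I anticipate is bookkeeping rather than depth: the sign of $1/\alpha'$ selects different branches of~\eqref{eq:inteval_I_gamma} and the endpoint arithmetic has to be carried out separately in each sub-case. The essential payoff of the algebra is a coincidence — that the two sub-cases sharing the same sign of $\beta'$ give, after simplification, the same rational combination of $1$ and $\beta'$ — so that the divisibility condition depends only on the sign of $\beta'$ and not on whether $\alpha = \beta$ or $\alpha = -\beta$. This is what makes the statement clean and uniform in $\lfloor\beta\rfloor$.
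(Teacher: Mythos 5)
Your proposal is correct and follows essentially the same route as the paper: reduce via Proposition~\ref{t:MaPaPe} to the Kesten-type length criterion, compute $|\mathcal{I}_{1/\alpha',\D}|$ in the four sub-cases (your formulas agree with the paper's, since $\tfrac{1}{1+\beta'}=\tfrac{\beta}{\beta-1}$ when $\beta\beta'=-1$ and $\tfrac{1}{1-\beta'}=\tfrac{\beta}{\beta-1}$ when $\beta\beta'=1$), and rationalise using the minimal polynomial to read off the divisibility condition. The paper's proof is exactly this computation, so no further comment is needed.
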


\begin{proof}
  Combining Propositions~\ref{t:MaPaPe} and~\ref{coro:Kesten} it is enough to study when
  \[
  \big|\mathcal{I}_{\frac{1}{\alpha'},\D}\big| \in \mathbb{Z} + \mathbb{Z}\beta.
  \]
  Let $\D = \{m,\ldots,0,\ldots,M\}$.  We will divide the proof into cases based on the minimal
  polynomial of $\beta$ and the sign of $\alpha$.

  \medskip

  \begin{enumerate}[I.]
  \item
    $\beta$ with minimal polynomial $x^2-\lfloor\beta\rfloor x-1$, i.e.,
    $\beta+\beta'=\lfloor\beta\rfloor$ and $\beta\beta'=-1$.

    \smallskip

    \begin{enumerate}[a)]
    \item
      Let $\alpha=\beta$, i.e., $\frac{1}{\alpha'}=\frac{1}{\beta'}<-1$. From~(\ref{eq:inteval_I_gamma})
      we get
      \begin{align*}
        \big|\mathcal{I}_{\frac{1}{\alpha'},\D}\big| & =
        \frac{\alpha'}{1-(\alpha')^2}\Big(M\frac{1}{\alpha'}+m-M-m\frac{1}{\alpha'}\Big) = \\[1mm]
         & = \frac{\alpha'}{1-(\alpha')^2}(M-m)\Big(\frac{1}{\alpha'}-1\Big) = \frac{M-m}{1+\alpha'}
        = \frac{M-m}{1+\beta'}.
      \end{align*}

    \item
      Let $\alpha=-\beta$, i.e., $\frac{1}{\alpha'}=-\frac{1}{\beta'}>1$. Then
      \[
         \big|\mathcal{I}_{\frac{1}{\alpha'},\D}\big|  = \frac{M-m}{1-\alpha'} =
         \frac{M-m}{1+\beta'}. 
      \]

    \end{enumerate}

    \noindent
    In both cases (by expanding the fraction by $\frac{1+\beta}{1+\beta}$) we get
    \[
    \frac{M-m}{1+\beta'} = \frac{(M-m)(1+\beta)}{1 + (\beta+\beta') + \beta\beta'} =
    (1+\beta)\frac{M-m}{\lfloor\beta\rfloor} = (1+\beta)\frac{\#\D-1}{\lfloor\beta\rfloor}.
    \]
    Therefore
    \[
    \big|\mathcal{I}_{\frac{1}{\alpha'},\D}\big| \in \mathbb{Z} + \mathbb{Z}\beta
    \quad\Longleftrightarrow\quad \lfloor\beta\rfloor\ \big|\ (\#\D-1).
    \]

    \medskip

  \item
    $\beta$ with minimal polynomial $x^2-\lceil\beta\rceil x+1$, i.e.,
    $\beta+\beta'=\lceil\beta\rceil$ and $\beta\beta'=1$.

    \smallskip

    \begin{enumerate}[a)]
    \item
      Let $\alpha=\beta$, i.e., $\frac{1}{\alpha'}=\frac{1}{\beta'}>1$. Then
      \[
      \big|\mathcal{I}_{\frac{1}{\alpha'},\D}\big| = \frac{M-m}{1-\alpha'} = \frac{M-m}{1-\beta'}.
      \]

    \item
      Let $\alpha=-\beta$, i.e., $\frac{1}{\alpha'}=-\frac{1}{\beta'}<-1$. Then
      \[
        \big|\mathcal{I}_{\frac{1}{\alpha'},\D}\big| = \frac{M-m}{1+\alpha'} = \frac{M-m}{1-\beta'}.
      \]

    \end{enumerate}

    \noindent
    In both cases
    \[
      \frac{M-m}{1-\beta'}
      = \frac{(M-m)(1-\beta)}{1 - (\beta+\beta') + \beta\beta'} =
      (\beta-1)\frac{M-m}{\lceil\beta\rceil-2} = (\beta-1)\frac{M-m}{\lfloor\beta\rfloor-1}.
    \]
    Therefore
    \[
      \big|\mathcal{I}_{\frac{1}{\alpha'},\D}\big| \in \mathbb{Z} + \mathbb{Z}\beta
      \quad\Longleftrightarrow\quad (\lfloor\beta\rfloor-1)\ \big|\ (\#\D-1).
    \]

  \end{enumerate}

\end{proof}


%

As it was shown in Proposition~\ref{t:MaPaPe}, the generalized spectrum $X^{\D}(\alpha)$ coincides
with a cut-and-project set, which has at most three distinct values of distances between consecutive
elements. Therefore the spectrum $X^{\D}(\alpha)$ is a geometric representation of a ternary or a
binary infinite word. For any infinite word $\bu$, balancedness implies that $\bu$ has a BDL
geometrical representation.  But existence of a BDL geometric representation does not force
balancedness of $\bu$. For the spectrum $X^{\D}(\alpha)$ of quadratic Pisot units these two
properties are equivalent.

\begin{corollary}\label{coro:ekvivalenceSpektra}
  Let $\beta$, $\alpha$ and $\D$ satisfy the assumptions of Theorem~\ref{t:spectra}.  If
  $X^{\D}(\alpha)$ is bounded distance equivalent to a lattice $\xi \mathbb{Z}$, then $\xi
  = \frac{\beta-\beta'}{\#\D-1}(1-|\beta'|)$ and the infinite word coding $X^{\D}(\alpha)$ is
  balanced.
\end{corollary}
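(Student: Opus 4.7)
The plan is to push everything through the identification $X^{\D}(\alpha)=\Sigma_{\beta',\beta}(\Omega)$ supplied by Proposition~\ref{t:MaPaPe} and then read off both conclusions from Corollary~\ref{coro:ekvivalenceCAP}. By definition of $\bu_{\beta',\beta}(\Omega)$, the infinite word coding $X^{\D}(\alpha)$ coincides with $\bu_{\beta',\beta}(\Omega)$; hence the equivalence $(a)\Leftrightarrow(b)$ of Corollary~\ref{coro:ekvivalenceCAP}, applied to the BDL hypothesis, immediately yields balancedness. That settles the second assertion.

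For the value of $\xi$, the same corollary gives
\[
\xi=\frac{|\beta-\beta'|}{|\Omega|}=\frac{\beta-\beta'}{|\Omega|},
\]
where the last equality uses that $\beta>1>|\beta'|$ since $\beta$ is a Pisot unit. It then suffices to establish the identity
\[
|\Omega|\,(1-|\beta'|)=\#\D-1,
\]
which is exactly what is needed to rewrite $\xi$ in the advertised form. I would read $|\Omega|$ off from the two computations already carried out inside the proof of Theorem~\ref{t:spectra}. In Case~I (minimal polynomial $x^2-\lfloor\beta\rfloor x-1$, so $\beta+\beta'=\lfloor\beta\rfloor$, $\beta\beta'=-1$, and $\beta'<0$) one has $|\Omega|=(1+\beta)\frac{\#\D-1}{\lfloor\beta\rfloor}$ and $1-|\beta'|=1+\beta'$; the product $(1+\beta)(1+\beta')$ collapses via the minimal polynomial to $\lfloor\beta\rfloor$, which cancels the denominator. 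In Case~II (minimal polynomial $x^2-\lceil\beta\rceil x+1$, so $\beta+\beta'=\lceil\beta\rceil$, $\beta\beta'=1$, $\beta'>0$, and $\lceil\beta\rceil-2=\lfloor\beta\rfloor-1$) one has $|\Omega|=(\beta-1)\frac{\#\D-1}{\lfloor\beta\rfloor-1}$ and $1-|\beta'|=1-\beta'$; the product $(\beta-1)(1-\beta')$ collapses to $\lceil\beta\rceil-2$, again matching the denominator. In both cases the displayed identity drops out and the value of $\xi$ follows.

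There is no serious obstacle here: once Proposition~\ref{t:MaPaPe} and Corollary~\ref{coro:ekvivalenceCAP} are in place, the argument reduces to the same two-case algebraic check already performed for the divisibility condition in Theorem~\ref{t:spectra}. The only point requiring mild care is tracking the sign of $\beta'$, which governs both the form of $|\beta'|$ and which expression for $|\Omega|$ is to be used.
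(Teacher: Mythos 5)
Your proposal is correct and follows essentially the same route as the paper: identify $X^{\D}(\alpha)$ with $\Sigma_{\beta',\beta}(\Omega)$ via Proposition~\ref{t:MaPaPe}, invoke Corollary~\ref{coro:ekvivalenceCAP} to get balancedness and $\xi=\frac{\beta-\beta'}{|\Omega|}$, and then substitute the two expressions for $|\Omega|$ from the proof of Theorem~\ref{t:spectra}, using $(1+\beta)(1+\beta')=\lfloor\beta\rfloor$ when $\beta'<0$ and $(\beta-1)(1-\beta')=\lfloor\beta\rfloor-1$ when $\beta'>0$. The only cosmetic difference is that you package the case analysis as the single identity $|\Omega|(1-|\beta'|)=\#\D-1$, which is exactly the computation the paper performs.
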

\begin{proof}
  By Proposition~\ref{t:MaPaPe}, $X^{\D}(\alpha)=\Sigma_{\beta',\beta}(\Omega)$. Using
  Corollary~\ref{coro:ekvivalenceCAP} with $\varepsilon = \beta'$ and $\eta = \beta$, we get that
  $X^{\D}(\alpha)$ is BDL to $\xi \mathbb{Z}$ with $\xi = \frac{\beta - \beta'}{|\Omega|}$
  and the word coding $X^{\D}(\alpha)$ is balanced.  To determine $\xi$ we discuss two
  cases:

  If $\beta' <0$, then by proof of Theorem~\ref{t:spectra}, the length of the acceptance interval is
  $|\Omega| = (1+\beta) \frac{\#\D-1}{\floor{\beta}}$.  As $\floor{\beta} = \beta+\beta' =
  (1+\beta)(1+\beta')$ we deduce
  \[
  \xi=   \frac{(\beta-\beta')\floor{\beta}}{(\#\D-1)(1+\beta)}  =  \frac{\beta-\beta'}{\#\D-1}(1+\beta')\,.
  \]

  If $\beta' >0$, then by proof of Theorem~\ref{t:spectra}, the length of the acceptance interval is
  $|\Omega| = (\beta-1) \frac{\#\D-1}{\floor{\beta}-1}$.  As $\floor{\beta}-1 = (\beta-1)(1-\beta')$
  we deduce
  \[
  \xi=   \frac{(\beta-\beta')(\floor{\beta}-1)}{(\#\D-1)(\beta-1)}  =  \frac{\beta-\beta'}{\#\D-1}(1-\beta')\,.
  \]
\end{proof}

\begin{example} \label{ex:examplecontinue2}
  The most simple example of the generalized spectrum in the sense of Proposition~\ref{t:MaPaPe} is
  the spectrum of $-\tau$ with the alphabet $\D=\{0,1\}$.  From Proposition~\ref{t:MaPaPe} we see
  that $X^{\{0,1\}}(-\tau) = \Sigma_{\tau',\tau}[0,\tau^2)$. Using relation~\eqref{eq:transformaceCAP}
  and the unimodular matrix $\left(\begin{smallmatrix} 0 & -1 \\ 1 & 2\end{smallmatrix}\!\right)$
  we can write that
  \[
  \Sigma_{{\tau'}^2,\tau^2}[0,1) = \tau^2 \Sigma_{\tau',\tau}[0,\tau^2).
  \]
  We have thus identified the spectrum with a scaled copy of the cut-and-project set from
  Example~\ref{ExampleContinue}. From there we derive that the distances in
  $X^{\{0,1\}}(-\tau)=\tau^{-2}\Sigma_{{\tau'}^2,\tau^2}[0,1)$ take values $1$ and $\tau^{-1}$.

  It is known~\cite{DoMaVa} that the spectrum $X^{\{0,1\}}(-\tau)$ coincides with the set
  $\Z_{-\beta}$ of $(-\tau)$-integers and that the infinite word over the alphabet $\A=\{A,B\}$,
  $l_A=1$, $l_B=\tau^{-1}$ coding the ordering of distances in $\Z_{-\beta}$ is a fixed point of the
  substitution $A\mapsto AAB$, $B\mapsto AB$, with the initial pair $A|B$. This identifies the
  spectrum $X^{\{0,1\}}(-\tau)$ with a geometric representation
  of the fixed point presented in Example~\ref{ex:selfsimgeomrep}. There we have already calculated
  frequencies of the letters $ \rho_A = \tau^{-1}$ and $ \rho_B = \tau^{-2}$.  Since the incidence
  matrix of the substitution has eigenvalues $\tau^2 >1$ and $\tau^{-2}< 1$, Theorem~\ref{t:Adam}
  implies that the binary word is balanced and therefore by Proposition~\ref{p:anygeomBDL} its
  geometrical representation with the length $ \ell_A = 1$ and $ \ell_B = \tau^{-1}$ is bounded
  distance to the lattice $\xi \mathbb{Z}$, with
  $\xi = \ell_A \rho_A +\ell_B \rho_B = \tau^{-1}+\tau^{-3}$.
  Let us note, that the same value $\xi$ can be computed also using Corollary~\ref{coro:ekvivalenceSpektra}.

  In Figure~\ref{fig:bdl_repr_(-tau)}, several elements of the spectrum $X^{\{0,1\}}(-\tau)$ are
  drawn and marked with their $(-\tau)$-representation. For example, $101{\scriptstyle\bullet} =
  (-\tau)^2+(-\tau)^0$.  The figure also shows the bijection between the spectrum and the lattice,
  to which it is bounded distance equivalent.
\end{example}

\begin{figure}[!htp]

  \centering

  \begin{tikzpicture}[x=1.5cm,y=1.5cm]

    \draw (-5.1246,0) node [left, xshift = -10] {$\eta\Z$:} -- (3.618,0);
    \draw (-5.1246,1) node [left, xshift = -10] {$X^{\{0,1\}}(-\tau)$:} -- (3.618,1);

    \draw (-4.854,0.95) -- (-4.854,1.05) node [above left, yshift = -3, rotate = -45]
          {$1011\scriptstyle\bullet$};
    \draw (-4.236,0.95) -- (-4.236,1.05) node [above left, yshift = -3, rotate = -45]
          {$1000\scriptstyle\bullet$};
    \draw (-3.236,0.95) -- (-3.236,1.05) node [above left, yshift = -3, rotate = -45]
          {$1110\scriptstyle\bullet$};
    \draw (-2.236,0.95) -- (-2.236,1.05) node [above left, yshift = -3, rotate = -45]
          {$1111\scriptstyle\bullet$};
    \draw (-1.618,0.95) -- (-1.618,1.05) node [above left, yshift = -3, rotate = -45]
          {$10\scriptstyle\bullet$};
    \draw (-0.618,0.95) -- (-0.618,1.05) node [above left, yshift = -3, rotate = -45]
          {$11\scriptstyle\bullet$};
    \draw (0,0.95) -- (0,1.05) node [above left, yshift = -3, rotate = -45]
          {$0\scriptstyle\bullet$};
    \draw (1,0.95) -- (1,1.05) node [above left, yshift = -3, rotate = -45]
          {$1\scriptstyle\bullet$};
    \draw (2,0.95) -- (2,1.05) node [above left, yshift = -3, rotate = -45]
          {$111\scriptstyle\bullet$};
    \draw (2.618,0.95) -- (2.618,1.05) node [above left, yshift = -3, rotate = -45]
          {$100\scriptstyle\bullet$};
    \draw (3.618,0.95) -- (3.618,1.05) node [above left, yshift = -3, rotate = -45]
          {$101\scriptstyle\bullet$};

    \draw (-5.1246,-0.05) node [anchor = north] {$-6\eta$} -- (-5.1246,0.05);
    \draw (-4.2705,-0.05) node [anchor = north] {$-5\eta$} -- (-4.2705,0.05);
    \draw (-3.4164,-0.05) node [anchor = north] {$-4\eta$} -- (-3.4164,0.05);
    \draw (-2.5623,-0.05) node [anchor = north] {$-3\eta$} -- (-2.5623,0.05);
    \draw (-1.7082,-0.05) node [anchor = north] {$-2\eta$} -- (-1.7082,0.05);
    \draw (-0.8541,-0.05) node [anchor = north] {$-\eta$} -- (-0.8541,0.05);
    \draw (0.0000,-0.05) node [anchor = north] {$0$} -- (0.0000,0.05);
    \draw (0.8541,-0.05) node [anchor = north] {$\eta$} -- (0.8541,0.05);
    \draw (1.7082,-0.05) node [anchor = north] {$2\eta$} -- (1.7082,0.05);
    \draw (2.5623,-0.05) node [anchor = north] {$3\eta$} -- (2.5623,0.05);
    \draw (3.4164,-0.05) node [anchor = north] {$4\eta$} -- (3.4164,0.05);

    \draw [gray,<->] (-5.1246,0.1) -- (-4.854,0.9);
    \draw [gray,<->] (-4.2705,0.1) -- (-4.236,0.9);
    \draw [gray,<->] (-3.4164,0.1) -- (-3.236,0.9);
    \draw [gray,<->] (-2.5623,0.1) -- (-2.236,0.9);
    \draw [gray,<->] (-1.7082,0.1) -- (-1.618,0.9);
    \draw [gray,<->] (-0.8541,0.1) -- (-0.618,0.9);
    \draw [gray,<->] (0,0.1) -- (0,0.9);
    \draw [gray,<->] (0.8541,0.1) -- (1,0.9);
    \draw [gray,<->] (1.7082,0.1) -- (2,0.9);
    \draw [gray,<->] (2.5623,0.1) -- (2.618,0.9);
    \draw [gray,<->] (3.4164,0.1) -- (3.618,0.9);

  \end{tikzpicture}

  \caption{Illustration of bounded distance equivalence between elements of the spectra
    $X^{\{0,1\}}(-\tau)$ and points of the lattice $\eta\Z$.}\label{fig:bdl_repr_(-tau)}
\end{figure}
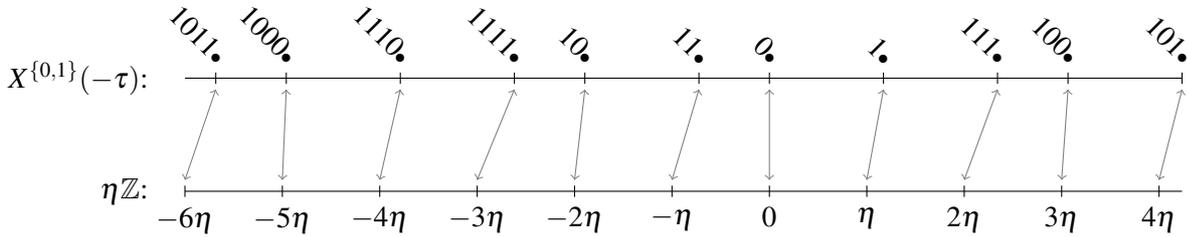




\section{Comments}

One-dimensional BDL sequences can be used to construct structures having an average lattice in
higher dimension. In particular, if $\Lambda_1,\Lambda_2\subset\R$ are BDL sequences, then the set
$\Lambda_1\vec{u}+\Lambda_2\vec{v}=\{x\vec{u}+y\vec{v}: x\in\Lambda_1,y\in\Lambda_2\}\subset\R^2$ is a BDL set for
any pair of linearly independent vectors $\vec{u},\vec{v}\in\R^2$.  In Figure~\ref{fig:2d}, the
Fibonacci grid is displayed together with its average lattice. The generating vectors
$\vec{u},\vec{v}$ form the angle $2\pi/5$.

\begin{figure}[!htp]

  \centering

  \begin{tikzpicture}[x=1.2cm,y=1.2cm]

    \clip (-6.75,-5.25) rectangle (3.5, 2.9);


    \draw [lightgray] (-6.7082,-4.8737)
        -- ++(0:0.8542) coordinate (mh-1)
        -- ++(0:0.8542) coordinate (mh-2)
        -- ++(0:0.8542) coordinate (mh-3)
        -- ++(0:0.8542) coordinate (mh-4)
        -- ++(0:0.8542) coordinate (mh-5)
        -- ++(0:0.8542) coordinate (mh-6)
        -- ++(0:0.8542) coordinate (mh-7)
        -- ++(0:0.8542) coordinate (mh-8)
        -- ++(0:0.8542) coordinate (mh-9);

    \foreach \i in {1,...,9} {
      \draw [lightgray] (mh-\i) -- ++(72:7.6868);
    }

    \draw [lightgray] (-6.7082,-4.8737)
        -- ++(72:0.8542) coordinate (mv-1)
        -- ++(72:0.8542) coordinate (mv-2)
        -- ++(72:0.8542) coordinate (mv-3)
        -- ++(72:0.8542) coordinate (mv-4)
        -- ++(72:0.8542) coordinate (mv-5)
        -- ++(72:0.8542) coordinate (mv-6)
        -- ++(72:0.8542) coordinate (mv-7)
        -- ++(72:0.8542) coordinate (mv-8)
        -- ++(72:0.8542) coordinate (mv-9);

    \foreach \i in {1,...,9} {
      \draw [lightgray] (mv-\i) -- ++(0:7.6868);
    }


    \path (-6.3539,-4.6164) coordinate (v-0) 
         -- ++(72:0.618) coordinate (v-1) 
         -- ++(72:1) coordinate (v-2) 
         -- ++(72:1) coordinate (v-3) 
         -- ++(72:0.618) coordinate (v-4) 
         -- ++(72:1) coordinate (v-5) 
         -- ++(72:0.618) coordinate (v-6) 
         -- ++(72:1) coordinate (v-7) 
         -- ++(72:1) coordinate (v-8) 
         -- ++(72:0.618) coordinate (v-9) 
         -- ++(72:1) coordinate(v-10); 

    \path (-6.3539,-4.6164) coordinate (h-0)
         -- ++(0:0.618) coordinate (h-1)
         -- ++(0:1) coordinate (h-2)
         -- ++(0:1) coordinate (h-3)
         -- ++(0:0.618) coordinate (h-4)
         -- ++(0:1) coordinate (h-5)
         -- ++(0:0.618) coordinate (h-6)
         -- ++(0:1) coordinate (h-7)
         -- ++(0:1) coordinate (h-8)
         -- ++(0:0.618) coordinate (h-9)
         -- ++(0:1) coordinate(h-10);

    \foreach \i in {0,1,...,10} {
      \path [name path global = horiz-\i] (v-\i) -- 
      ++(0:8.472);
      \path [name path global = vert-\i] (h-\i) -- 
      ++(72:8.472);
     }

    \foreach \sx in {0,1,...,9} {
      \foreach \sy in {0,1,...,9} {
        \fill [black, name intersections={of={horiz-\sx} and {vert-\sy}}] (intersection-1) circle (2pt);
      }
    }

    \draw (0,0) node [anchor = north west] {(0,0)};
    \draw (0:0) -- ++(0:3);
    \draw (0:0) -- ++(180:5.5);
    \draw (0:0) -- ++(72:3);
    \draw (0:0) -- ++(252:5.5);

\end{tikzpicture}

  \caption{The Fibonacci grid and its average lattice.}\label{fig:2d}

\end{figure}
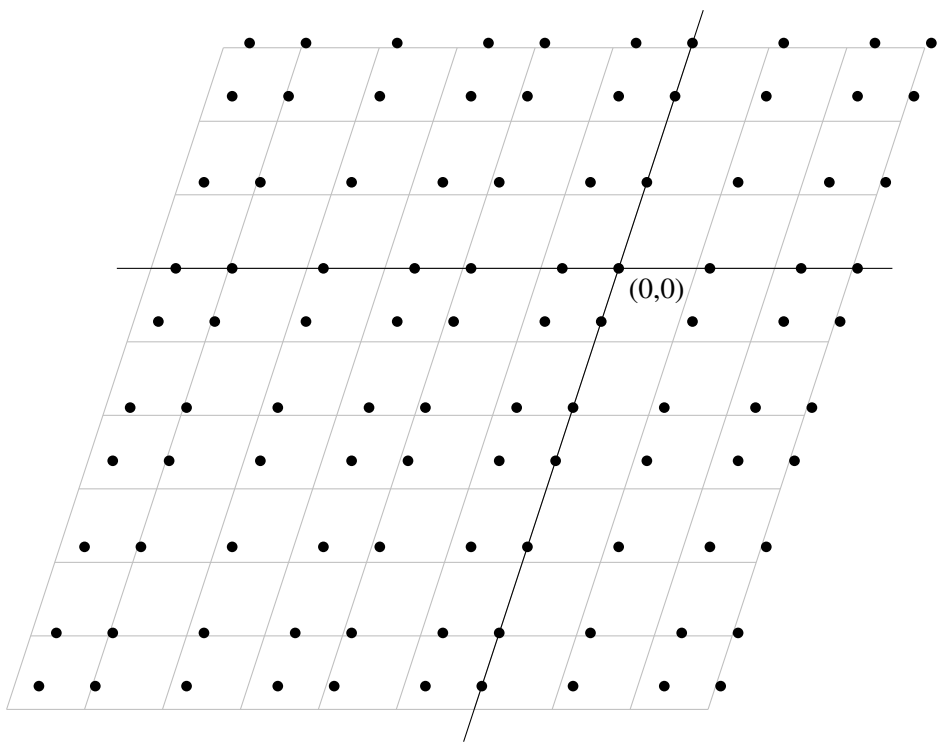

The Cartesian product of one-dimensional BDL sets is a special case of higher-dimensional structures
bounded distance equivalent to a lattice.  The spectra of Pisot-cyclotomic numbers provide a 2D
structure having moreover non-crystallographic symmetries. In~\cite{HaMaVa}  the spectra of
quadratic Pisot-cyclotomic numbers are identified with cut-and-project sets arising from dimension
4. Their BDL property is yet to be explored.

%
%
%
%
%


\section*{Acknowledgments}


This work was supported by the project CZ.02.1.01/0.0/0.0/16\_019/0000778 from European Regional Development Fund.



\printbibliography




\end{document}